\newcommand{\myboldmath}[1]{\mathbbm{#1}}
\newcommand{\R}{\myboldmath{R}}
\newcommand{\N}{\myboldmath{N}}
\newcommand{\1}{\mathbf{1}}
\newcommand{\E}{\myboldmath{E}}
\theoremstyle{plain}
\newtheorem{theorem}{Theorem}
\newtheorem{proposition}[theorem]{Proposition}
\newtheorem{lemma}[theorem]{Lemma}
\newtheorem*{lemma*}{Auxiliary Lemma}
\newtheorem{corollary}[theorem]{Corollary}
\theoremstyle{definition}
\theoremstyle{remark}
\DeclareMathOperator{\lk}{lk}
\def\polhk#1{\setbox0=\hbox{#1}{\ooalign{\hidewidth
    \lower1.5ex\hbox{`}\hidewidth\crcr\unhbox0}}}
\begin{document}

\title{On Topological Minors in Random Simplicial Complexes\thanks{Research supported by the Swiss National Science Foundation (SNF Projects 200021-125309 and 200020-138230)}}
\author{Anna Gundert\footnote{Universit\"{a}t zu K\"{o}ln, Weyertal 86--90, 50923 K\"{o}ln, Germany. \texttt{anna.gundert@uni-koeln.de}. Work on this paper was conducted at the Institute of Theoretical Computer Science, ETH Z\"{u}rich.} \and Uli Wagner\footnote{IST Austria, Am Campus 1, 3400 Klosterneuburg, Austria. \texttt{uli@ist.ac.at}.}}

\maketitle

For random graphs, the \emph{containment problem} considers the probability that a binomial random graph $G(n,p)$ contains a given graph as a substructure.
When asking for the graph as a \emph{topological minor}, i.e. for a copy of a \emph{subdivision} of the given graph, it is well-known that the (sharp) threshold is at $p=1/n$.
We consider a natural analogue of this question for higher-dimensional random complexes $X^k(n,p)$,
first studied by Cohen, Costa, Farber and Kappeler for $k=2$.

Improving previous results, we show that $p=\Theta(1/\sqrt{n})$ is the (coarse) threshold for containing a subdivision of any fixed complete $2$-complex. \change{The upper bound for the threshold extends to the case of $k>2$, for $p = \Theta(n^{-1/k})$.}{\protect
For higher dimensions $k>2$, we get that $p=O(n^{-1/k})$ is an upper bound for the threshold probability of containing a subdivision 
of a fixed $k$-dimensional complex.}


\section{Introduction}

A basic problem in graph theory is to determine whether a given graph $G$, which may be thought of as ``large'', contains a fixed graph $H$
as a substructure.
The most straightforward form of containment is that $G$ contains a copy of $H$ as a \emph{subgraph}. 
Another important variant is that $G$ contains some \emph{subdivision} of $H$ as a subgraph; 
in this case, one also says that $G$ contains $H$ as a \emph{topological minor}.

For random graphs, the \emph{containment problem} considers the probability that a binomial random graph $G(n,p)$ contains a copy of a given graph $H$.
For subgraph containment, it is well-known \cite{Bollobas:1981} that this probability has a (coarse) threshold of $\Theta(n^{-1/m(H)})$, where $m(H)$ is the density of the densest subgraph of $H$. The (sharp) threshold for containment of any complete graph of fixed size as a topological minor is  $p=1/n$ by a well-known result of Ajtai, Koml\'{o}s and Szemer\'{e}di\change{\protect\cite{AjtaiKomlosSzemeredi:1979,ErdosRenyi:1960}}{\protect\footnote{The lower bound on the threshold follows from \cite[Theorem 5e]{ErdosRenyi:1960}.} \cite{AjtaiKomlosSzemeredi:1979}.} 
%
%

\add{\protect Here, a sequence $\hat{p}=\hat{p}(n)$ is called a \emph{threshold} for an increasing graph property $\mathcal{Q}$ if the probability $\Pr[G(n,p) \text{ has } \mathcal{Q}]$ tends to $0$ if $p=o(\hat{p})$ and to $1$ if $\hat{p}=o(p)$. A threshold $\hat{p}$ is \emph{sharp} if for any $\epsilon >0$
\[
\Pr[G(n,p) \text{ has } \mathcal{Q}] \xrightarrow[n\rightarrow \infty]{} \begin{cases}0&\text{if }p\leq(1-\epsilon)\hat{p},\\1&\text{if }p\geq(1+\epsilon)\hat{p}.\end{cases} 
\]
Any monotone graph property has a threshold \cite{Bollobas:1987}. If a monotone graph property does not have a sharp threshold, it has a \emph{coarse threshold}. (For more detailed results on thresholds of graph properties, see, e.g., \cite{Friedgut:1999,Friedgut:1996} and \cite[Chapter 1]{Janson:2000} for an overview.)
}

A random model $X^k(n,p)$ for simplicial complexes of arbitrary fixed dimension $k$ which generalizes the random graph model $G(n,p)$ was introduced by Linial and Meshulam~\cite{LinialMeshulam:HomologicalConnectivityRandom2Complexes-2006} and has since then been studied extensively, see, e.g., \cite{Aronshtam:CollapsibilityVanishingTopHomologyRandomComplexes-2013,BabsonHoffmanKahle:SimpleConnectivityRandom2Complexes,CohenCostaFarberKappeler:2012,Kozlov:2009p2037,MeshulamWallach:HomologicalConnectivityRandomComplexes-2009}.
Subgraph containment admits a direct generalization to higher dimensions: We can ask whether a given simplicial complex $X$ contains a fixed complex $K$ as a subcomplex.
The proof methods for random graphs extend directly to random complexes of higher dimension, and the threshold probability for $X^k(n,p)$ to contain a fixed complex $K$ as a subcomplex is given by the density (in terms of \add{\protect number of} $k$-faces versus \add{\protect number of} vertices) of the densest subcomplex of $K$, see~\cite{BabsonHoffmanKahle:SimpleConnectivityRandom2Complexes,CohenCostaFarberKappeler:2012}.

As a natural higher-dimensional analogue of topological graph minors, we say that a complex $X$ has a fixed complex $K$ as a \emph{topological minor} if $X$ contains some subdivision of $K$.
Cohen, Costa, Farber and Kappeler~\cite{CohenCostaFarberKappeler:2012} show that for any $\epsilon >0 $ and $p\geq n^{-1/2 + \epsilon}$, the random complex $X^2(n,p)$ \emph{asymptotically almost surely (a.a.s.)}, i.e., with probability tending to $1$ as $n\rightarrow\infty$, contains any fixed $K$ as a topological minor.
Their method also extends to random complexes $X^k(n,p)$ of higher dimension $k>2$ with $p\geq n^{-1/k + \epsilon}$.

\change{We improve this upper bound on the threshold probability for $2$-dimensional subdivision containment to $p=O(1/\sqrt{n})$ and show an upper bound of $O(n^{-1/k})$ for random $k$-complexes $X^k(n,p)$ with $k > 2$:}{\protect We improve this upper bound on the threshold probability for containing a fixed $k$-dimensional topological minor to $O(n^{-1/k})$:}

\begin{theorem}\label{THM - 1statement}
For every $k \geq 2$ and $t \geq k+1$ there is a constant $c=c(t,k)>1$ such that $X^k(n,p)$ with $p\geq \sqrt[k]{c/n}$ a.a.s.~contains the complete $k$-complex $K_t^k$ on $t$ vertices as a topological minor.
\end{theorem}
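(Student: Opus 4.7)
The plan is to reduce to a local filling problem. Place the $t$ branch vertices arbitrarily in $[n]$; since the $(k-1)$-skeleton of $X^k(n,p)$ is complete on $[n]$, the $(k-1)$-skeleton of $K_t^k$ embeds directly on these $t$ vertices with no subdivision at all. It then suffices, for each of the $\binom{t}{k+1}$ top-dimensional simplices $\sigma$ of $K_t^k$, to find a triangulated $k$-ball $B_\sigma \subset X^k(n,p)$ with $\partial B_\sigma = \partial \sigma$, so that the interiors of different $B_\sigma$'s are vertex-disjoint. Processing the $\sigma$'s sequentially and forbidding the $O_{t,k}(1)$ interior vertices of the previously built $B_{\sigma'}$'s at each step, the whole argument reduces to a local filling lemma: for $p = (c/n)^{1/k}$ with $c = c(t,k)$ sufficiently large, any fixed boundary $\partial \sigma$ can a.a.s.\ be filled by a triangulated $k$-ball in $X^k(n,p)$ whose interior vertices avoid any given constant-size set $S \subset [n]$.

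The central obstacle is that no single fixed shape of filling works. A combinatorial lower-bound computation shows that a triangulated $k$-ball with boundary $\partial \Delta^k$ and $N$ interior vertices has at least $kN+1$ top-dimensional faces, with equality for iterated stellar subdivisions of a single cone over $\partial\sigma$. Consequently, the expected number of embeddings of any fixed combinatorial type is at most $O(n^N p^{kN+1}) = O(n^{-1/k}) \to 0$, and the filling must therefore be allowed to vary among many combinatorial types. The key observation driving the plan is that the number of combinatorial types of triangulated $k$-balls with $N$ interior vertices grows exponentially in $N$, so summing the first-moment contributions over a suitable family of filling gadgets yields divergent expectation once $c$ exceeds a constant $c_0(k)$.

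To promote divergent expectation to a.a.s.\ existence I would either (i) establish a second-moment concentration argument on a carefully chosen subfamily of rigid gadgets --- e.g., iterated stellar subdivisions of the single cone over $\partial\sigma$, which form a tree-like family in which typical pairs share only $O(1)$ interior vertices --- or (ii) use a two-round sprinkling $p = p_1 + p_2 - p_1 p_2$ in which the first round produces many disjoint partial candidate fillings and the independent second round closes one of them off. The principal difficulty I anticipate is controlling the pairwise correlations between candidate fillings: the gadget family must be flexible enough that many embeddings exist in expectation, yet rigid enough that the second-moment cross-terms (equivalently, the failure probability of the second sprinkling round) do not swamp the gain from the first moment. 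Designing the right family and carrying out the overlap enumeration is where I expect most of the real work to lie.
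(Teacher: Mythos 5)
Your opening reduction --- placing the $t$ branch vertices on the complete $(k-1)$-skeleton and filling each boundary $\partial\sigma$, $\sigma\in\binom{[t]}{k+1}$, with internally disjoint triangulated $k$-balls --- is exactly the paper's set-up, and your first-moment observation that any \emph{fixed} combinatorial type of filling has expected count $O(n^{N}p^{kN+1})=O(n^{-1/k})$ correctly identifies why a growing family of fillings is needed. But the step you defer (``where most of the real work lies'') is the entire proof: neither the second-moment computation over stellar subdivisions nor the sprinkling scheme is carried out, and no overlap enumeration is attempted. The paper needs none of this machinery. For a $(k-1)$-face $F$ it considers the common link graph $G_F=\bigcap_{H\in\binom{F}{k-1}}\lk_X(H)$, a random graph with edge probability $p^k=c/n$, hence supercritical; a path $v=x_0,\dots,x_m=w$ inside its giant component (restricted to a block $W_\sigma$ reserved in advance for $\sigma$, which is how disjointness of the fillings is guaranteed), the $k$ facets $\{x_i,x_{i+1}\}\cup H$ for each path edge, the facet $F\cup\{v\}$, and the $k$ facets $\{a,w\}\cup H$ together form a triangulated $k$-ball with $N=m+1$ interior vertices and exactly $kN+1$ facets --- one instance of the extremal type you single out. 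Its existence follows from the Erd\H{o}s--R\'enyi phase transition (Theorem~\ref{THM:PhaseTransition}, failure probability $e^{-\Omega(n^{1/3})}$) plus two Chernoff bounds, so correlations between candidate fillings never have to be controlled.

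There is also a concrete problem with your plan to place the branch vertices \emph{arbitrarily} and then prove an a.a.s.\ filling lemma for each fixed boundary. With gadgets of the above fan/stellar kind, the filling must attach to a distinguished branch vertex $a$ through $k$ simultaneous $k$-faces $\{a,w\}\cup H$, and for a fixed $a$ the probability that no $w$ in the giant component works is $(1-p^k)^{\Theta(n)}=e^{-\Theta(c)}$, a quantity bounded away from $0$ as $n\to\infty$ for any fixed $c$. So the fixed-boundary local lemma you posit is strictly stronger than what the paper proves, and it is not delivered by the natural extremal family without genuinely new work (either richer structure near the boundary vertices or a delicate second moment). The paper sidesteps this by proving only that for every $F$ and $\sigma$ all but a constant $\delta$-fraction of potential branch vertices attach to the giant component, and then using an averaging/union-bound argument over random $t$-tuples (Lemmas~\ref{Lemma:BasicIdea} and~\ref{Lemma:ExistenceGoodSet}) to show that \emph{some} choice of the $t$ branch vertices works simultaneously for all $\sigma$. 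To repair your plan you would need either to build in such a choice of branch vertices, or to actually prove the harder fixed-boundary filling lemma --- which is precisely the content that is missing from the proposal.
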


\add{\protect Here, the complete $k$-complex $K_t^k$ is the $k$-skeleton of the $(t-1)$-simplex and consists of all sets of cardinality $\leq k+1$ on $t$ vertices. Thus, every finite $k$-complex is contained in $K_t^k$ for large enough $t$.}

The result in \cite{CohenCostaFarberKappeler:2012} is proven by reduction to the subcomplex containment problem, by showing that a given $2$-complex $K$ can be subdivided to decrease its triangle density. For Theorem~\ref{THM - 1statement} we use a different approach, based on an idea going back at least to Brown, Erd\H{o}s and S\'os \cite{Brown:1973ve} and used also in \cite{BabsonHoffmanKahle:SimpleConnectivityRandom2Complexes}:
For $2$-complexes our proof is based on studying common links of pairs of vertices, which form random graphs of the type \mbox{$G(n-2,p^2)$} and then uses results on the phase transition in random graphs. This approach also extends to complexes of dimension $k>2$.
For a (possibly more approachable) sketch of the proof for the case $k=2$ we refer the reader to the extended abstract~\cite{GundertWagner:2013} of this paper.

For $2$-complexes we also \change{show the corresponding $0$-statement}{\protect show a corresponding lower bound} and thus establish that the \remove{(coarse)} threshold for containing a subdivision of any fixed complete $2$-complex is at \remove{$p=$}$\Theta(1/\sqrt{n})$:

\begin{theorem}\label{THM - 0statement}
There is a constant $c<1$ such that for any $t \geq 10$ the random $2$-complex $X^2(n,p)$ with $p = \sqrt{c/n}$ a.a.s.~does not have $K_t^2$ as a topological minor.
\end{theorem}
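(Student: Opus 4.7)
My plan is a first-moment (Markov) argument. Writing $N$ for the number of subcomplexes of $X^2(n,p)$ that are subdivisions of $K_t^2$, I will show $\E[N] \to 0$ when $p = \sqrt{c/n}$ for a sufficiently small constant $c > 0$.

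Any subdivision $Y$ of $K_t^2$ admits a canonical decomposition: $t$ \emph{branch vertices} $v_1, \ldots, v_t$; a simple path $P_{ij}$ of some length $\ell_{ij} \geq 1$ representing each edge $\{i,j\}$ of $K_t^2$; and a triangulated $2$-disk $D_{ijk}$ with boundary cycle $P_{ij} \cup P_{jk} \cup P_{ik}$ and $m_{ijk} \geq 0$ interior vertices representing each triangle $\{i,j,k\}$. Writing $E_P = \sum_{i<j} \ell_{ij}$ and $V_D = \sum_{i<j<k} m_{ijk}$, Euler's formula applied disk by disk gives
\[
V(Y) = t + E_P - \binom{t}{2} + V_D, \qquad F(Y) = (t-2) E_P + 2 V_D - 2 \binom{t}{3}.
\]
Since the $1$-skeleton of $X^2(n,p)$ is complete, a fixed labeled $Y$ is contained in $X^2(n,p)$ with probability exactly $p^{F(Y)}$, and the number of labeled $Y$ with prescribed $(E_P, V_D)$ is bounded by $n^{V(Y)}$ times a combinatorial factor accounting for path-length compositions, disk-size compositions, and disk triangulation types. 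Using the standard exponential bound on the number of triangulations of a disk with prescribed boundary length and number of interior vertices, this combinatorial factor is at most $C^{(t-2) E_P + V_D}$ for some absolute constant $C$.

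Substituting $p^2 = c/n$ turns $n^V p^F$ into $c^{F/2} n^{-(F-2V)/2}$, and a direct computation yields the key identity
\[
F - 2V = (t-4) E_P - 2 \binom{t}{3} + t^2 - 3t,
\]
which is \emph{independent of $V_D$} and grows by $t-4$ per additional path edge. At the minimum $(E_P, V_D) = (\binom{t}{2}, 0)$ (i.e.\ $Y = K_t^2$ itself) this equals $\alpha(t) := t(t-1)(t-8)/6 + t^2 - 3t$, which is $100$ at $t = 10$ and grows cubically thereafter. Summing the bound over all $(E_P, V_D)$ then factors $\E[N]$ as a prefactor $n^{-\alpha(t)/2}$ times two geometric series: one in $V_D$ with ratio $cC$, and one in the excess $E_P - \binom{t}{2}$ with ratio $(c^{1/2} C)^{t-2} n^{-(t-4)/2}$. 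For $c < 1/C^2$ both ratios are less than $1$, the series sum to $O(1)$, and $\E[N] = O(n^{-\alpha(t)/2}) \to 0$.

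The main obstacle will be the careful combinatorial enumeration of subdivisions: one must bound the number of ways to triangulate each disk (and how the boundary paths are shared among disks) so that the combinatorial factor genuinely stays within $C^{(t-2) E_P + V_D}$. The hypothesis $t \geq 10$ leaves comfortable slack in $F - 2V$; the method only breaks down below $t = 6$, where $\alpha(t)$ becomes nonpositive.
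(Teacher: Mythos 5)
Your proposal is correct in outline, but it takes a genuinely different route from the paper. The paper does not enumerate subdivisions of $K_t^2$ at all: it observes that for $t\geq 10$ the complex $K_t^2$ contains a $10$-vertex triangulation $T_0$ of the genus-$2$ surface $\Sigma_2$, and that any subdivision of $T_0$ is again a triangulation of $\Sigma_2$. The first moment is then taken over the single-parameter family of all $l$-vertex triangulations of $\Sigma_2$: their number is at most $K^l$ by Gao's enumeration of triangular maps on surfaces, and Euler's formula forces $f_2=2(l+2)$, so the expectation is bounded by $\sum_l K^l n^l p^{2(l+2)}\to 0$ for $c<1/K$. You instead work directly with the path-and-disk decomposition of a subdivision of $K_t^2$ (which matches the paper's own description of $2$-dimensional subdivisions), derive the identity $F-2V=(t-4)E_P-2\binom{t}{3}+t^2-3t$, and control the entropy by an exponential bound on the number of triangulated disks with prescribed boundary length and interior vertex count (a Tutte/Catalan-type planar bound, more classical than Gao's genus-$2$ result). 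Your computations check out: the disk Euler count $b+2m-2$, the formulas for $V(Y)$ and $F(Y)$, the value $\alpha(10)=100$, and the two geometric ratios $cC$ and $(c^{1/2}C)^{t-2}n^{-(t-4)/2}$ are all correct, and the constant $c$ you obtain is independent of $t$, as the theorem requires. What each approach buys: the paper's reduction is shorter and needs no multi-parameter bookkeeping, but it requires $t\geq 10$ (the minimal triangulation of $\Sigma_2$) and the nontrivial surface-map enumeration input; your argument needs the more careful accounting you flag (compositions of path lengths and disk sizes, boundary-compatible disk triangulation types, and the harmless $t$-dependent constant factors these generate), but it uses only planar enumeration and in fact covers all $t\geq 6$, failing exactly at $t=5$ where $\alpha(5)=0$ reflects that $K_5^2$ sits at the critical density for $p\sim n^{-1/2}$. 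The one point you should make explicit when writing this up is the disk-triangulation bound itself (or a citation for it), since it plays the same load-bearing role in your argument that Gao's theorem plays in the paper's.
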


The somewhat technical proof of Theorem~\ref{THM - 0statement} is based on bounds on the number of triangulations of a fixed surface.

\add{\protect
Before we give the proofs of these results, we present a simpler application of the basic idea behind the proof of Theorem~\ref{THM - 1statement}. We are grateful to Matt Kahle for asking the second author whether
our method could be applied in this way.

We consider the threshold for vanishing of the fundamental group of $X^2(n,p)$.
In \cite{BabsonHoffmanKahle:SimpleConnectivityRandom2Complexes}, Babson, Hoffman and Kahle show a lower bound of order $O(n^{-\epsilon}/n^{1/2})$ for any $\epsilon >0$ as well as an an upper bound of $\sqrt{(log(n)+\varphi(n))/n}$ for any function $\varphi$ with $\varphi(n) \rightarrow \infty$ as $n\rightarrow\infty$;
as mentioned above, the proof of the upper bound is based on studying the random graphs that appear as common 
links of pairs of vertices, which are connected for $p$ in this regime. Using a slightly more involved argument based on 
these random graphs, we show that the logarithmic factor in the upper bound is unnecessary:

\begin{theorem}\label{THM - pi_1}
There is a constant $c>0$ such that the random $2$-complex $X^2(n,p)$ with $p \geq \sqrt{c/n}$ a.a.s.~has a trivial fundamental group: $\pi_1(X^2(n,p))=0$.
\end{theorem}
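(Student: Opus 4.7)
The plan is to adapt the Babson--Hoffman--Kahle argument for simple connectivity, replacing the use of \emph{connectivity} of the common-link graphs by a weaker \emph{giant-component} statement already available at $p = \Theta(1/\sqrt{n})$. Since the $1$-skeleton of $X^2(n,p)$ is the complete graph $K_n$, a spanning-tree argument in $\pi_1(K_n)$ shows that $\pi_1(X^2(n,p))$ is generated by the $3$-cycles $uvw$, so it suffices to show that each such cycle bounds a disk in $X^2(n,p)$. For each pair $\{u,v\}$ we use two standard random objects: the edge link $L_{uv} = \{x : \{u,v,x\}\text{ is a } 2\text{-face}\}$, of cardinality $\mathrm{Bin}(n-2,p)$, and the common-link graph $G_{uv}$ on $V\setminus\{u,v\}$ with edge $\{x,y\}$ iff both $\{u,x,y\}$ and $\{v,x,y\}$ are $2$-faces, so that $G_{uv}\sim G(n-2,p^2)$. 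Since $L_{uv}$ and $G_{uv}$ depend on disjoint families of potential $2$-faces, they are independent.

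The core step is a \emph{sliding lemma}: let $\mathrm{Good}_{uv}$ denote the set of $y$ for which the cycle $uvy$ is null-homotopic in $X^2(n,p)$. The claim is that whenever $x\in L_{uv}$, the entire $G_{uv}$-component of $x$ lies in $\mathrm{Good}_{uv}$, which we prove by induction on $G_{uv}$-distance from $x$. The base case $y=x$ is immediate since $\{u,v,x\}$ is itself a disk bounded by $uvx$. For the step, if $y\in\mathrm{Good}_{uv}$ and $\{y,y'\}$ is an edge of $G_{uv}$, the two $2$-faces $\{u,y,y'\}$ and $\{v,y,y'\}$ meet along $yy'$ and together form a quadrilateral $Q$ with boundary $u$-$y$-$v$-$y'$-$u$; gluing any disk bounded by $uvy$ to $Q$ along the connected arc $u$-$y$-$v$ appearing in both boundaries produces a disk bounded by $uvy'$.

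For the probabilistic step, fix $c$ large enough that the Erd\H{o}s--R\'enyi theorem guarantees that the giant component $C_{uv}$ of $G_{uv}\sim G(n-2,c/n)$ has size at least $(1-\delta)n$ except with probability $e^{-\Omega(n)}$, where $\delta=\delta(c)$ can be made arbitrarily small by taking $c$ large. Conditioning on $G_{uv}$, each vertex of $C_{uv}$ belongs independently to $L_{uv}$ with probability $p$, so
\[
\Pr\!\left[L_{uv}\cap C_{uv}=\emptyset \;\middle|\; |C_{uv}|\geq (1-\delta)n\right] \leq (1-p)^{(1-\delta)n} \leq e^{-(1-\delta)\sqrt{cn}}.
\]
A union bound over the $\binom{n}{2}$ pairs then yields that a.a.s.\ $\mathrm{Good}_{uv}\supseteq C_{uv}$, and in particular $|\mathrm{Good}_{uv}|\geq (1-\delta)n$, simultaneously for every pair.

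On this event, for any triple $u,v,w$ the three sets $\mathrm{Good}_{uv}$, $\mathrm{Good}_{uw}$, $\mathrm{Good}_{vw}$ each omit at most $\delta n$ vertices, so choosing $\delta<1/3$ their common intersection contains some $x\notin\{u,v,w\}$. Each of the cycles $uvx$, $uwx$, $vwx$ is then null-homotopic, and assembling their bounding disks pairwise along the three edges $ux$, $vx$, $wx$ at $x$ realizes three triangular faces of the combinatorial tetrahedron $uvwx$ as disks, leaving its fourth face $uvw$ as the boundary of the resulting disk. The main technical care lies in the topological bookkeeping of the sliding lemma and this final assembly: in each step two disks are combined along a single connected arc of their boundaries, so the union is again a disk. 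Once this is formalised, the remainder is a routine Erd\H{o}s--R\'enyi giant-component calculation, and the case $p>\sqrt{c/n}$ follows by monotonicity of simple connectivity in $p$.
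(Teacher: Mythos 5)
Your proposal is correct and follows essentially the same route as the paper: both reduce to filling all $3$-cycles via a common vertex $x$ good for all three pairs, both identify the common-link graph $G_{uv}\sim G(n-2,p^2)$ and use the giant-component theorem together with a Chernoff/independence argument to show the set of good vertices covers all but a small fraction of $[n]$, and both finish with a union bound. Your ``sliding lemma'' is just an inductive restatement of the paper's observation that a $G_{uv}$-path from $x$ to a vertex $y$ with $\{u,v,y\}\in X$, together with that triangle, is a subdivided disk filling $uvx$, so the arguments coincide in substance.
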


An upper bound of $\frac{1}{2\sqrt{n}}$ on this threshold for vanishing of the fundamental group has recently been shown by Kor{\'a}ndi, Peled and Sudakov in \cite{KorandiPeledSudakov:2015}.
In \cite{AntoniukLuczakSwiatkowski:2014}, Antoniuk, {\L}uczak and {\'S}wi{\polhk{a}}tkowski use an idea similar to the one presented here to study the triviality of random triangular groups.
}


%

\section{Preliminaries}
\paragraph{Abstract Simplicial Complexes.}
A (finite abstract) \emph{simplicial complex} is a finite set system that is closed under taking subsets, i.e., $F \subset H \in X$ implies $F \in X$.
The sets $F \in X$ are called \emph{faces} of $X$. The \emph{dimension} of a face $F$ is $\dim(F)=|F|-1$. The dimension of $X$ is the maximal dimension of any face. A $k$-dimensional simplicial complex will also be called a \emph{$k$-complex}.

We denote the set of $i$-dimensional faces by $X_i$. The \emph{$i$-skeleton} of $X$ is the simplicial complex $X_{-1} \cup X_0 \cup \ldots \cup X_i$.
A \emph{vertex} of $X$ is a $0$-dimensional face $\{v\}$, the singleton set will be identified with its element $v$. The set of vertices $X_0$, also denoted by $V = V(X)$, is called the \emph{vertex set} of $X$.
Corresponding to the notation $G=(V,E)$ for graphs, we write $2$-complexes as $X=(V,E,T)$ with $E=X_1$ and $T=X_2$.

The random $k$-complex $X^k(n,p)$ has vertex set $V = [n]$, a \emph{complete $(k-1)$-skeleton}, i.e., $X_i = \binom{[n]}{i+1}$ for $i < k$, and every $F \in \binom{[n]}{k+1}$ is added to $X$ independently with probability $p$, which may be constant or, more generally, a function $p(n)$ depending on $n$.
The \emph{complete $k$-complex} $K_t^k$ has vertex set \change{$V = [n]$}{$V = [t]$} and \change{$X_i=\binom{[n]}{i+1}$}{$X_i=\binom{[t]}{i+1}$} for all $i \leq k$.
The \emph{link} $\lk_X(F)$ of a face $F \in X$ in $X$ is the complex \change{$\{H \in X: H \cup F \in X\}$}{$\{H \in X: H \cup F \in X, H\cap F=\emptyset\}$}.

\paragraph{Geometric Simplicial Complexes.}
There is a more geometric way to define simplicial complexes: A \emph{geometric simplex} $\sigma$ is the convex hull of a set of affinely independent points, the \emph{vertices} of $\sigma$, in some Euclidean space $\R^m$. The convex hull of a subset of the vertices of $\sigma$ is a \emph{face} of $\sigma$. A \emph{geometric simplicial complex} is then a finite collection $\Delta$ of geometric simplices in $\R^m$ satisfying two conditions: If $\sigma$ is in $\Delta$ and $\tau$ is a face of $\sigma$, then $\tau$ is also in $\Delta$. Furthermore, the intersection of any two simplices in $\Delta$ is a common face of both, or empty. 

A geometric simplicial complex $\Delta$ defines a topological space, its \emph{polyhedron}, the union of all its simplices: $\|\Delta\| = \bigcup_{\sigma \in \Delta}\sigma \subset \R^m$. It carries the subspace topology inherited from the ambient Euclidean space $\R^m$.
We call $\Delta$ a \emph{triangulation} of $\|\Delta\|$.
Any geometric simplicial complex $\Delta$ gives rise to an abstract complex $X$ in a straight-forward way: A set of vertices forms an (abstract) simplex in $X$ iff it is the vertex set of a geometric simplex in $\Delta$. The geometric complex $\Delta$ is then called a \emph{geometric realization} of $X$, or of any abstract complex isomorphic to it. Here, two simplicial complexes are \emph{isomorphic} if there is a face-preserving bijection between their vertex sets.
 
Any abstract complex $X$ has a geometric realization, e.g., as a subcomplex of a simplex of sufficiently high dimension. We denote by $\|X\|$ the polyhedron of any geometric realization of $X$. This is well-defined because the polyhedra of (geometric realizations of) two isomorphic complexes are homeomorphic (see, e.g., \cite{Matousek:2003}). Also the abstract complex $X$ is called a triangulation of $\|X\|$.
\begin{figure}[htbp]
  \centering\includegraphics[scale=.9]{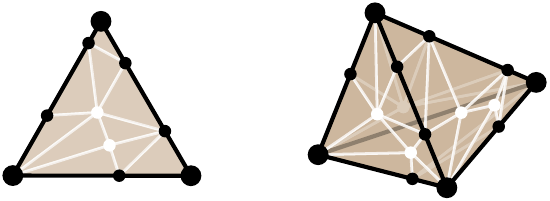}
  \caption{Subdivisions of $K_3^2$ (a \emph{triangulated disk}) and of $K_4^2$. Vertices and edges internal to triangles are drawn in white.\label{fig:subdivision}}
\end{figure}
\paragraph{Subcomplexes and Subdivisions.}
A \emph{subcomplex} of $X$ (or $\Delta$) is a subset $Y \subset X$ ($\Delta$) that is itself a simplicial complex. A \emph{subdivision} of a geometric simplicial complex $\Delta$ is a complex $\Delta'$ with $\|\Delta\|=\|\Delta'\|$ such that every simplex of $\Delta'$ is contained in some simplex of $\Delta$. For an abstract complex $X$, a complex $X'$ is a subdivision of $X$ if there exist geometric realizations $\Delta$ and $\Delta'$ of $X$ and $X'$ such that $\Delta'$ is a subdivision of $\Delta$.
A subdivision of an abstract $2$-complex $X$ can be seen as a $2$-complex $X'$ that is obtained by replacing the edges of $X$ with internally-disjoint paths and the triangles of $X$ with internally-disjoint triangulated disks such that for every triangle the subdivision of the triangle agrees with the subdivisions of its edges. See Figure~\ref{fig:subdivision} for an illustration.

We say that a complex $X$ has a fixed complex $K$ as a \emph{topological minor} if $X$ contains a copy of some subdivision of $K$.

%
\section{On the Threshold for Vanishing Fundamental Group}\label{Sec - pi_1}

\add{\protect Before we come to the proof of Theorem~\ref{THM - 1statement}, the upper bound for the threshold for topological minor containment, we present a simpler application of its basic idea and prove Theorem~\ref{THM - pi_1}.

The fundamental group $\pi_1(Y,y_0)$ of a topological space $Y$ at a point $y_0\in Y$ is the group of homotopy classes of loops, i.e., continuous maps $f:I\rightarrow Y$ with the same starting and ending point $f(0) = f(1) = y_0$. The fundamental group of a path-connected space is independent, up to isomorphism, of the choice of basepoint $y_0$ and is then denoted by $\pi_1(Y)$. (See, e.g., \cite{Hatcher:2002} for more details.)

For a simplicial complex $X$ the fundamental group $\pi_1(X):=\pi_1(\|X\|)$ depends only on the $2$-skeleton of $X$, and it suffices to consider loops on the $1$-skeleton (``edge paths''). (See, e.g., \cite{Spanier:1981}.)

To prove Theorem~\ref{THM - pi_1}, we will show that in $X^2(n,p)$ with $p\geq\sqrt{c/n}$ (for $c>0$ sufficiently large) a.a.s.~every cycle consisting of three edges can be filled with a subdivision of a disk. This implies that every $3$-cycle (and, because the $1$-skeleton is the complete graph, also every other cycle) is homotopically trivial. As the property of having a trivial fundamental group is monotone (preserved under adding simplices), it is enough to consider the case $p=\sqrt{c/n}$.

Consider three vertices $a,b,c \in [n]$. To fill the cycle spanned by $a$,$b$ and $c$, we find a fourth vertex $x$ and fill the three cycles $\{a,b,x\}$,$\{a,c,x\}$ and $\{b,c,x\}$.

\begin{figure}[htbp]
  \centering\includegraphics[scale=.8]{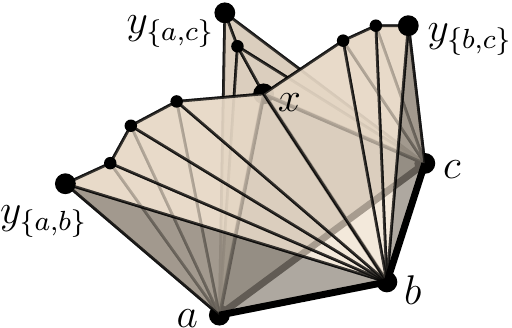}
  \caption{A filling of $\{a,b,c\}$ given by a vertex $x$ good for $\{a,b\}$,$\{a,c\}$ and $\{b,c\}$ \label{fig:triangle}}
\end{figure}
For a pair of vertices $a,b \in [n]$, we consider the graph $G_{a,b} := \lk_X(a) \cap \lk_X(b)$. We call a vertex $x$ \emph{``good''} for the pair $\{a,b\}$ if there is a vertex $y$ with $\{a,b,y\} \in X$ such that $x$ and $y$ are contained in the same connected component of $G_{a,b}$. Note that for a good vertex $x$, any path connecting $x$ and $y$ in $G_{a,b}$ together with the triangle $\{a,b,y\}$ forms a subdivision of a disk filling the cycle $\{a,b,x\}$.
Hence, finding a vertex $x$ that is good for $\{a,b\}$,$\{a,c\}$ and $\{b,c\}$ gives a filling of the cycle $\{a,b,c\}$. See Figure~\ref{fig:triangle} for an illustration. (Note that we do not have to (and possibly will not) find disjoint fillings of the three cycles $\{a,b,x\}$, $\{a,c,x\}$ and $\{b,c,x\}$ as depicted in the figure.)
}

\add{\protect
We denote the set of vertices that are good for $\{a,b\}$ by $S_{a,b}$. In $X^2(n,p)$ we can ensure that for any pair of vertices this set of good vertices is large with high probability:
\begin{lemma}\label{Lemma:SizeGoodSets}
There exist constants $c,\beta>0$ such that in $X^2(n,p)$ with $p=\sqrt{c/n}$
\[
\Pr\big[|S_{a,b}| >\tfrac{2}{3}n\big] \geq 1 - e^{-\beta n^{1/3}}
\]
 for any $a,b \in [n]$.
\end{lemma}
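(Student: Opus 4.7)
The plan is to leverage two independent pieces of random structure inside $X^2(n,p)$. On the one hand, the edges of $G_{a,b}$ are determined by the triangles $\{a,x,y\}$ and $\{b,x,y\}$ with $x,y\notin\{a,b\}$; on the other, the set $N_{a,b}:=\{y\in[n]\setminus\{a,b\}:\{a,b,y\}\in X\}$ is determined by the triangles $\{a,b,y\}$. These two families of triangles are disjoint, so $G_{a,b}$ and $N_{a,b}$ are stochastically independent. Each potential edge $\{x,y\}$ lies in $G_{a,b}$ with probability $p^2=c/n$ independently, so $G_{a,b}\sim G(n-2,c/n)$; and each $y$ lies in $N_{a,b}$ with probability $p=\sqrt{c/n}$ independently. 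The key observation is that any connected component $C$ of $G_{a,b}$ is entirely contained in $S_{a,b}$ as soon as $C\cap N_{a,b}\neq\emptyset$; in particular, it suffices to exhibit a component $L_1$ of $G_{a,b}$ with $|L_1|>\tfrac{2}{3}n$ that meets $N_{a,b}$.

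The first step is to pick the constant $c$ large enough that $G(n-2,c/n)$ lies deep in the supercritical regime. Let $\rho(c)\in(0,1)$ denote the smallest root of $\rho=e^{-c(1-\rho)}$, i.e.\ the extinction probability of a $\mathrm{Poi}(c)$ Galton--Watson process; then $1-\rho(c)\to 1$ as $c\to\infty$, and we fix $c$ large enough that $1-\rho(c)>\tfrac{5}{6}$. Standard quantitative versions of the giant component theorem for supercritical $G(n,c/n)$ (see, e.g., \cite{Janson:2000}) then yield a constant $\gamma_1=\gamma_1(c)>0$ such that, writing $L_1$ for the largest component of $G_{a,b}$,
\[
\Pr\!\left[|L_1|<\tfrac{5}{6}n\right]\;\leq\;e^{-\gamma_1 n}.
\]

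The second step uses the independence of $N_{a,b}$ from $G_{a,b}$: conditioning on any realisation of $G_{a,b}$ with $|L_1|\geq\tfrac{5}{6}n$, the probability that $L_1$ is disjoint from $N_{a,b}$ equals $(1-p)^{|L_1|}\leq e^{-p|L_1|}\leq e^{-(5/6)\sqrt{cn}}$. A union bound then gives
\[
\Pr\!\left[|S_{a,b}|\leq\tfrac{2}{3}n\right]\;\leq\;e^{-\gamma_1 n}+e^{-(5/6)\sqrt{cn}},
\]
which is at most $e^{-\beta n^{1/3}}$ for a suitable $\beta>0$ and all sufficiently large $n$.

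The only non-trivial input is the exponential large--deviation estimate on $|L_1|$ in Step~1; the rest is elementary. Because the target tail $e^{-\beta n^{1/3}}$ is considerably weaker than the exponential decay available deep in the supercritical regime, there is substantial slack, and one could alternatively obtain everything needed from scratch by a direct BFS exploration of the component of a fixed $y\in N_{a,b}$ in $G_{a,b}$, combined with Chernoff bounds on the numbers of newly discovered vertices at each level.
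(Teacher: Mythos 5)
Your argument is correct and follows essentially the same route as the paper: identify $G_{a,b}$ as a supercritical $G(n-2,c/n)$, show its largest component is of linear size except with probability $e^{-\Omega(n^{1/3})}$ (the paper uses its Theorem~\ref{THM:PhaseTransition}, quoting Erd\H{o}s--R\'enyi/Krivelevich--Sudakov, where you invoke a stronger $e^{-\gamma_1 n}$ lower-tail bound that is not actually needed), and then use the independence of the triangles $\{a,b,y\}$ from $G_{a,b}$ to show the giant component meets $N_{a,b}$ except with probability $e^{-\Omega(\sqrt{n})}$. The paper phrases this last step as a conditioning on $C_{a,b}=S$ plus a Chernoff bound for a Binomial$(|S|,p)$ variable, which is the same computation as your $(1-p)^{|L_1|}$ estimate.
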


Our previous observations will allow us to reduce the problem to studying the graphs $G_{a,b}$, which are random graphs of type $G(n-2,p^2)$. We will employ the following classical result by Erd\H{o}s and R\'{e}nyi \cite{ErdosRenyi:1960} on the phase transition of the random graph $G(n,p)$. For a simple proof see \cite{KrivelevichSudakov:2012}, in particular Section 3.1 describing how to achieve the bound on the probability.

\begin{theorem}[\cite{ErdosRenyi:1960,KrivelevichSudakov:2012}]\label{THM:PhaseTransition}
For every  $\gamma >0$ there is $\kappa = \kappa(\gamma) >0$ such that with probability at least $1-e^{-\kappa n^{1/3}}$ the random graph $G(n,\frac{1 + \gamma}{n})$ has a connected component of size at least $\frac{\gamma}{2} n$.
\end{theorem}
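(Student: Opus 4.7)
The plan is to reduce the lemma to the phase transition in the Erd\H{o}s--R\'enyi random graph, applied to the common-link graph $G_{a,b}$. Two observations are central. First, $G_{a,b}$ is distributed as $G(n-2,p^2)=G(n-2,c/n)$: its vertex set is $[n]\setminus\{a,b\}$ (because the $1$-skeleton of $X^2(n,p)$ is complete), and a pair $\{v,w\}$ is an edge of $G_{a,b}$ iff both $\{a,v,w\}$ and $\{b,v,w\}$ are triangles of $X$, which happens with probability $p^2$ independently across pairs. Second, the random set $Y:=\{v:\{a,b,v\}\in X\}$ depends on a disjoint family of $2$-faces (those containing both $a$ and $b$), so $Y$ is independent of $G_{a,b}$, and each $v\in[n]\setminus\{a,b\}$ lies in $Y$ independently with probability $p$.

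I will then take the constant $c$ large enough (any $c$ with $c-1>4/3$ works for $n$ large) so that Theorem~\ref{THM:PhaseTransition} applies to $G_{a,b}\sim G(n-2,c/n)$ with parameter $\gamma$ satisfying $\gamma/2>2/3$. This yields a constant $\kappa>0$ such that, with probability at least $1-e^{-\kappa n^{1/3}}$, $G_{a,b}$ has a connected component $C$ with $|C|>\tfrac{2}{3}n$ for $n$ sufficiently large.

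Conditional on $G_{a,b}$ (and hence on $C$), the independence of $Y$ yields
\[
\Pr[Y\cap C=\emptyset\mid C]\le (1-p)^{|C|}\le e^{-p|C|}\le e^{-(2/3)\sqrt{cn}}.
\]
The key structural observation is that whenever $Y\cap C\neq\emptyset$, say $y\in Y\cap C$, every $x\in C$ shares a connected component of $G_{a,b}$ with $y$ and is therefore good for $\{a,b\}$; hence $C\subseteq S_{a,b}$, so $|S_{a,b}|\ge|C|>\tfrac{2}{3}n$. A union bound over the two bad events then gives
\[
\Pr\bigl[|S_{a,b}|\le\tfrac{2}{3}n\bigr]\le e^{-\kappa n^{1/3}}+e^{-(2/3)\sqrt{cn}}\le e^{-\beta n^{1/3}}
\]
for any $\beta<\kappa$ and $n$ sufficiently large, since $n^{1/2}$ dominates $n^{1/3}$.

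The only delicate point is the independence of $Y$ from $G_{a,b}$, which holds because the two events involve disjoint sets of $2$-faces of $X^2(n,p)$. Beyond that, the argument is a black-box application of Theorem~\ref{THM:PhaseTransition} combined with a trivial tail bound for hitting the giant component by an independent $p$-random subset; choosing $c$ large (rather than sharp) lets us avoid finer results on the precise size of the giant component in the supercritical regime.
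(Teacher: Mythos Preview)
Your proposal does not address the stated theorem. Theorem~\ref{THM:PhaseTransition} is the classical supercritical phase-transition result for $G(n,(1+\gamma)/n)$; the paper does not prove it but simply quotes it from \cite{ErdosRenyi:1960,KrivelevichSudakov:2012}. What you have written is instead a proof of Lemma~\ref{Lemma:SizeGoodSets} (the bound $\Pr[|S_{a,b}|>\tfrac{2}{3}n]\ge 1-e^{-\beta n^{1/3}}$), and it \emph{uses} Theorem~\ref{THM:PhaseTransition} as a black box rather than establishing it. As a proof of Theorem~\ref{THM:PhaseTransition} itself, the proposal is therefore vacuous.

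If your intended target was Lemma~\ref{Lemma:SizeGoodSets}, then your argument is correct and is essentially the paper's own proof: identify $G_{a,b}\sim G(n-2,p^2)=G(n-2,c/n)$, invoke Theorem~\ref{THM:PhaseTransition} to obtain a component $C$ with $|C|>\tfrac{2}{3}n$, observe that the set $Y=\{v:\{a,b,v\}\in X\}$ depends on $2$-faces disjoint from those determining $G_{a,b}$ and is hence independent of it, bound $\Pr[Y\cap C=\emptyset]\le e^{-p|C|}$, and conclude $C\subseteq S_{a,b}$ via a union bound. The only stylistic difference is that the paper packages the independence step as conditioning on the event $\{C_{a,b}=S\}$ and summing over admissible $S$, whereas you state the independence of $Y$ and $G_{a,b}$ directly; the substance is identical.
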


We will furthermore use the following classical result on the concentration of binomially distributed random variables:

\begin{theorem}[{Chernoff's inequality, see, e.g., \cite[Theorem~1]{Janson:2002}}]\label{Chernoff}
Let $X$ be a binomially distributed random variable with parameters $n$ and $p$. Then for any $t \geq 0$:
$$ \Pr[X \geq \E[X] + t] \leq e^{-\frac{t^2}{2(np +t/3)}} \quad \text{ and }\quad
 \Pr[X \leq \E[X] - t] \leq e^{-\frac{t^2}{2np}}.
$$
\end{theorem}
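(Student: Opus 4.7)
The approach is the classical exponential-moment (Chernoff) method. Write $X = \sum_{i=1}^n X_i$ with the $X_i$ independent Bernoulli$(p)$ variables and set $\mu = \E[X] = np$. For any $\lambda > 0$, Markov's inequality applied to $e^{\lambda X}$ gives
\[
\Pr[X \geq \mu + t] = \Pr[e^{\lambda X} \geq e^{\lambda(\mu+t)}] \leq e^{-\lambda(\mu+t)}\,\E[e^{\lambda X}].
\]
By independence, $\E[e^{\lambda X}] = (1-p+p e^{\lambda})^n$, and the elementary estimate $1+x \leq e^x$ applied factor-wise bounds this above by $\exp(\mu(e^\lambda - 1))$. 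Hence the right-hand side becomes $\exp\bigl(\mu(e^\lambda - 1) - \lambda(\mu+t)\bigr)$.

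I would then optimize over $\lambda > 0$: setting the derivative of the exponent to zero yields $\lambda^{*} = \log(1 + t/\mu)$, and substituting back produces the standard (tightest) form of Chernoff's bound,
\[
\Pr[X \geq \mu + t] \leq \exp\bigl(-\mu\, h(t/\mu)\bigr), \qquad h(x) := (1+x)\log(1+x) - x.
\]
To convert this into the Bernstein form stated in the theorem, I would verify the real-analytic inequality
\[
h(x) \geq \frac{x^2}{2(1 + x/3)} \qquad \text{for all } x \geq 0.
\]
After clearing denominators this amounts to showing $f(x) := 2(1 + x/3)\,h(x) - x^2 \geq 0$ on $[0,\infty)$, which follows by elementary differentiation: one checks $f(0) = f'(0) = f''(0) = 0$ and $f'''(x) \geq 0$ for $x \geq 0$. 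Plugging in $x = t/\mu$ and multiplying through by $\mu = np$ then yields $\mu\, h(t/\mu) \geq t^2/(2(np + t/3))$, giving the first inequality.

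For the lower tail I would run the same computation with $\lambda < 0$. The optimal choice satisfies $e^{\lambda^{*}} = 1 - t/\mu$, and the resulting exponent is $-\mu\, g(t/\mu)$ with $g(y) := (1-y)\log(1-y) + y$. The required analytic inequality $g(y) \geq y^2/2$ on $[0,1]$ is easier than for the upper tail: one has $g'(y) = -\log(1-y) \geq y$, and integrating from $0$ gives the bound without any $y/3$ correction. This yields $\Pr[X \leq \mu - t] \leq \exp(-t^2/(2np))$. The only nontrivial step is verifying the analytic inequality $h(x) \geq x^2/(2(1+x/3))$; all other steps are mechanical applications of the exponential-moment technique.
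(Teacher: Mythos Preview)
Your argument is correct and follows the standard Cram\'er--Chernoff exponential-moment derivation, with the Bernstein-form inequality deduced from the analytic bound $h(x)\geq x^2/(2(1+x/3))$; the third-derivative check you outline goes through (one computes $f'''(x)=4x/(3(1+x)^2)\geq 0$), and the lower-tail estimate $g(y)\geq y^2/2$ via $-\log(1-y)\geq y$ is likewise sound.

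There is nothing to compare here: the paper does not supply its own proof of this statement but simply quotes it from \cite[Theorem~1]{Janson:2002} as a standard concentration inequality. Your proposal fills in exactly the classical proof one would find in Janson's survey or any probability textbook, so it is consistent with the paper's treatment of the result as background.
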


\begin{proof}[Proof of Lemma~\ref{Lemma:SizeGoodSets}]
Fix $a,b \in [n]$. Denote by $C_{a,b}$ the largest connected component of the graph $G_{a,b} = \lk_X(a) \cap \lk_X(b)$. If there are several components of maximum size, let $C_{a,b}$ be the one containing the smallest vertex.
Note that if there is a vertex $y \in C_{a,b}$ such that $\{a,b,y\} \in X$, then every vertex in $C_{a,b}$ is good for $\{a,b\}$, i.e., $C_{a,b}\subseteq S_{a,b}$.
Hence, it suffices to consider the following event:
\[
\mathcal{A}_{a,b}=\big\{X\subseteq K_n^k:K_n^{k-1} \subseteq X, \exists y \in C_{a,b}: \{a,b,y\} \in X \text{ and } |C_{a,b}| >\tfrac{2}{3} n\big\}.
\]

For a fixed set $S\subset V\setminus\{a,b\}$ with $|S|> \tfrac{2}{3}n$ the number of vertices $y \in S$ with $\{a,b,y\} \in X$ is binomially distributed with parameters $|S|$ and $p$. Thus, its expectation is $|S|p$ and by Chernoff's inequality, Theorem~\ref{Chernoff}:
\[
\Pr\big[|\{ y \in S: \{a,b,y\} \in X\}|=0\big] \leq e^{-\frac{|S|p}{2}}\leq e^{-\sqrt\frac{cn}{9}}.
\]
We now condition on $C_{a,b}$ being $S$:
\[
\begin{split}
\Pr\big[\mathcal{A}_{a,b}\big] &= \sum_{S \subset V\setminus\{a,b\}} \Pr\big[\mathcal{A}_{a,b}\,\big|\,C_{a,b}=S\big]\cdot \Pr[C_{a,b} = S]\\
&=\sum_{|S|> \tfrac{2}{3}n} \Pr\big[ \exists y \in C_{a,b}: \{a,b,y\} \in X\,\big|\,C_{a,b}=S\big]\cdot \Pr[C_{a,b} = S]\\
&\geq \Big(1-e^{-\sqrt\frac{cn}{9}}\Big)\cdot\sum_{|S|> \tfrac{2}{3}n} \Pr[C_{a,b} = S]= \Big(1-e^{-\sqrt\frac{cn}{9}}\Big)\cdot \Pr[ |C_{a,b}| > \tfrac{2}{3} n].
\end{split}
\]
Recall that $C_{a,b}$ is the largest component of the graph $G_{a,b}$, which is a random graph of type $G(n-2,p^2)$. As $p^2=c/n$, we can now employ Theorem~\ref{THM:PhaseTransition}: If we choose $c$ large enough, there is $\kappa>0$ such that $C_{a,b}$ has size greater than $\tfrac{2}{3}n$ with probability $1-e^{-\kappa n^{1/3}}$. 
Then we have:
\[
\begin{split}
\Pr\big[\mathcal{A}_{a,b}\big] &\geq \Big(1-e^{-\sqrt\frac{cn}{9}}\Big)(1-e^{-\kappa n^{1/3}})= 1-e^{-\sqrt\frac{cn}{9}}-\Big(1-e^{-\sqrt\frac{cn}{9}}\Big)e^{-\kappa n^{1/3}}\\
 & \geq 1-e^{-\sqrt\frac{cn}{9}}-e^{-\kappa n^{1/3}} \geq 1-e^{-\log(2)\kappa n^{1/3}}.
\end{split}
\]
\end{proof}

Now, we know that with high probability the set of good vertices is large for any pair $a,b$ and use this to show that we can fill every $3$-cycle.
\begin{proposition}
There is $c>0$ such that in $X^2(n,p)$ with $p=\sqrt{c/n}$ a.a.s.~every cycle consisting of three edges can be filled with a subdivision of a disk.
\end{proposition}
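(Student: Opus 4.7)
The plan is to combine Lemma~\ref{Lemma:SizeGoodSets} with a union bound over pairs and a simple inclusion-exclusion over triples. Fix $c>0$ large enough so that Lemma~\ref{Lemma:SizeGoodSets} applies with some constant $\beta>0$, and take $p=\sqrt{c/n}$ as in the statement.

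First, I would apply the lemma to every pair $\{a,b\}\in\binom{[n]}{2}$ and take a union bound. Since $\binom{n}{2}\cdot e^{-\beta n^{1/3}}\to 0$, the event
\[
\mathcal{E} := \Big\{\, |S_{a,b}| > \tfrac{2}{3}n \text{ for every pair } a,b\in[n]\,\Big\}
\]
occurs a.a.s. On this event, for any triple of distinct vertices $a,b,c$, inclusion-exclusion gives
\[
|S_{a,b}\cap S_{a,c}\cap S_{b,c}| \;\geq\; |S_{a,b}|+|S_{a,c}|+|S_{b,c}| - 2n \;>\; 2n-2n \;=\; 0,
\]
so there exists a vertex $x$ that is simultaneously good for each of the three pairs $\{a,b\}$, $\{a,c\}$ and $\{b,c\}$.

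It then remains to assemble the filling. By definition of "good" (and as illustrated in Figure~\ref{fig:triangle}), each of the three cycles $\{a,b,x\}$, $\{a,c,x\}$, $\{b,c,x\}$ admits a filling by a subdivided disk obtained from a path in the appropriate common-link graph together with one triangle sharing the base edge. Gluing these three subdivided disks along the edges $ax$, $bx$, $cx$ produces a subdivision of a disk whose boundary is the $3$-cycle $\{a,b,c\}$, as desired.

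The main work has already been done in Lemma~\ref{Lemma:SizeGoodSets}; the remaining argument is essentially routine. The only minor point to verify is that gluing the three partial fillings really yields a subdivision of a disk even when their interiors are not disjoint. This is not a genuine obstacle: the three fillings share only the apex $x$ together with the edges $ax$, $bx$, $cx$, and the construction should be viewed combinatorially — one maps an abstract triangulated disk simplicially into $X^2(n,p)$, and incidental coincidences among interior vertices or edges do not affect the conclusion, since for the application to $\pi_1$ (and for being filled by a subdivision of a disk in the sense used throughout) it suffices that such a simplicial map exists.
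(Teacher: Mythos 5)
Your proposal is correct and follows essentially the same route as the paper: both rest on Lemma~\ref{Lemma:SizeGoodSets}, the observation (made before the lemma, with the same caveat about non-disjoint partial fillings) that a vertex good for all three pairs yields the filling, and a union bound; the only cosmetic differences are that you union-bound over pairs rather than triples and spell out the inclusion--exclusion step that the paper leaves implicit.
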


\begin{proof}
We observed above that finding a vertex $x$ that is good for $\{a,b\}$,$\{a,c\}$ and $\{b,c\}$ gives a filling of the cycle $\{a,b,c\}$. It hence suffices to show that the probability
\[
\Pr\big[\forall a,b,c\in [n]: S_{a,b}\cap S_{a,c} \cap S_{b,c} \neq\emptyset\big]
\]
converges to $1$.
For fixed $a,b,c \in [n]$ we can employ Lemma~\ref{Lemma:SizeGoodSets}:
\[\begin{split}
\Pr\big[S_{a,b}\cap S_{a,c} \cap S_{b,c} =\emptyset\big] &\leq \Pr\big[|S_{a,b}| \leq \tfrac{2}{3}n, |S_{a,c}| \leq \tfrac{2}{3}n \text{ or } |S_{b,c}| \leq \tfrac{2}{3}n\big]\\
& \leq \Pr\big[|S_{a,b}| \leq \tfrac{2}{3}n\big]+\Pr\big[|S_{a,c}| \leq \tfrac{2}{3}n\big]+\Pr\big[|S_{b,c}| \leq \tfrac{2}{3}n\big]\\
& \leq 3e^{-\beta n^{1/3}}.
\end{split}
\]

Using a union bound, we see that
\[
\begin{split}
\Pr\big[\forall a,b,c\in [n]: S_{a,b}\cap S_{a,c} \cap S_{b,c} \neq\emptyset\big] &\geq 1-\tbinom{n}{3}\cdot \Pr\big[S_{a,b}\cap S_{a,c} \cap S_{b,c} =\emptyset\big]\\
&\geq 1-3e^{-\beta n^{1/3}}.
\end{split}
\]

\end{proof}
}
\section{The Upper Bound for Topological Minor Containment}\label{Sec - 1statement}
We now address Theorem~\ref{THM - 1statement}. For fixed $t\geq k+1$, we aim to find a copy of a subdivision of $K_t^k$ in $X^k(n,p)$. We would be allowed to subdivide faces of $K_t^k$ of any dimension, but there will be no need for this:  we find $t$ vertices and take all faces of dimension at most $k-1$ spanned by these vertices to form the $(k-1)$-skeleton of our subdivision of $K_t^k$. We then show that the $k$-spheres (boundaries of $k$-simplices) spanned by the $(k-1)$-faces between any $k$ of them can be filled with disjoint triangulated $k$-simplices.

\paragraph{Basic Set-Up.}
We will only consider $k$-complexes with vertex set $[n]$ and complete $(k-1)$-skeleton. For notational convenience, we assume without loss of generality that $n$ is divisible by $2\binom{t}{k+1}$.
Fix a partition of the vertex set $V=[n]$ into two sets $U$ and $W$, each of size $\frac{n}{2}$. We will choose the $t$ vertices of $K_t^k$ from $U$, whereas the internal vertices for fillings will come from $W$. To ensure disjointness of the fillings of different $k$-spheres, we partition $W$ into $\binom{t}{k+1}$ sets $W_\sigma$, $\sigma \in \binom{[t]}{k+1}$, each of size $n/(2\binom{t}{k+1})$, and choose the internal vertices of the filling for each $\sigma\in \binom{[t]}{k+1}$ from $W_\sigma$.

For a $(k-1)$-face $F \in \binom{[n]}{k}$, denote by $G_F$ the graph $\bigcap_{H \in \binom{F}{k-1}} \lk_X(H)$ which has vertex set $[n]\!\setminus\!F$ and edge set $\{e \subset [n]\!\setminus\!F: e\cup H \in X \text{ for all } H \in \binom{F}{k-1}\}$. Denote by $C_F^\sigma$ the largest connected component of $G_F[W_\sigma]$. If there are several components of maximum size, let $C_F^\sigma$ be the one containing the smallest vertex.

\paragraph{The Main Idea.}
The basic idea of the proof is the following lemma, based on an idea going back at least to Brown, Erd\H{o}s and S\'os \cite{Brown:1973ve} that is also used in \cite{BabsonHoffmanKahle:SimpleConnectivityRandom2Complexes}.
\begin{figure}[htbp]
  \centering\includegraphics[scale=.8]{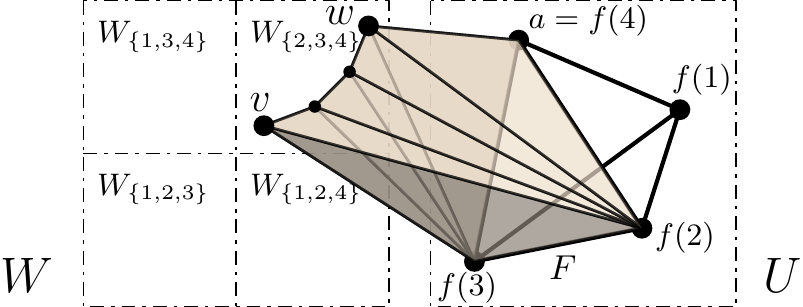}
  \caption{Part of a subdivision of a $K_4^2$: Filling of $f(\sigma)$ for $\sigma = \{2,3,4\}$ \label{fig:containment}}
\end{figure}
\begin{lemma}\label{Lemma:BasicIdea}
Let $X$ be a $k$-complex with vertex set $[n]$ and complete $(k-1)$-skeleton.
Suppose there is a set $A \subset U$, $|A|=t$ with a bijection $f\!\!:\!\![t]\rightarrow A$ satisfying the following property: For every $k$-face $\sigma \in \binom{[t]}{k+1}$ of $K_t^k$ there is a vertex $a \in f(\sigma)$ such that for $F=f(\sigma)\!\setminus\!\{a\}$ there are vertices $v,w \in C_F^\sigma$ with $F \cup \{v\} \in X$ and $\{a,w\} \in G_F$.
Then $X$ contains a subdivision of $K_t^k$.
\end{lemma}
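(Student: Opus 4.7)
The plan is to build an explicit subdivision of $K_t^k$ inside $X$ whose $t$ original vertices are the elements of $A$. Since $X$ has a complete $(k-1)$-skeleton, the $(k-1)$-skeleton of the subdivision on $A$ is automatic, so everything reduces to producing, for each $\sigma \in \binom{[t]}{k+1}$, a triangulated $k$-ball $D_\sigma \subseteq X$ with boundary $\partial f(\sigma)$, in such a way that the $D_\sigma$ have pairwise disjoint interiors.

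For a fixed $\sigma$, with $a$ and $F = f(\sigma) \setminus \{a\}$ as in the hypothesis, I choose a path $v = u_0, u_1, \ldots, u_\ell = w$ in $C_F^\sigma \subseteq G_F[W_\sigma]$ and take $D_\sigma$ to be the complex generated by the following $k$-faces: the \emph{tip} $F \cup \{v\}$; the \emph{ribbon} faces $H \cup \{u_i,u_{i+1}\}$ for $H \in \binom{F}{k-1}$ and $0 \leq i \leq \ell-1$; and the \emph{cap} faces $H \cup \{a,w\}$ for $H \in \binom{F}{k-1}$. All of these lie in $X$: the tip by the hypothesis $F\cup\{v\}\in X$, the ribbon faces because $\{u_i,u_{i+1}\}$ is an edge of $G_F$, and the cap faces because $\{a,w\}\in G_F$.

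The main step is to verify that $D_\sigma$ is a $k$-ball with boundary $\partial f(\sigma)$. Observe that the ribbon and cap together are exactly the top-dimensional simplices of the simplicial join $\partial F * P$, where $P$ is the triangulated path $u_0 - u_1 - \cdots - u_\ell - a$. Geometrically $|\partial F|\cong S^{k-2}$ and $|P|$ is a $1$-ball, so the join-boundary formula gives $\partial(\partial F * P) = (\partial F * \{v\}) \cup_{\partial F} (\partial F * \{a\})$, the suspension of $\partial F$ with apexes $v$ and $a$. A simple induction on $\ell$, repeatedly gluing the bipyramidal $k$-ball $\partial F * \{u_i,u_{i+1}\}$ onto the previously built $k$-ball along their common cone hemisphere $\partial F * \{u_i\}$, shows that $\partial F * P$ itself is PL-homeomorphic to a $k$-ball with the claimed boundary. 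Attaching the tip $F\cup\{v\}$ along its facet $\partial F * \{v\}$ then replaces that hemisphere by the facet $F$, yielding a $k$-ball with boundary $F \cup (\partial F * \{a\}) = \partial f(\sigma)$. This topological identification is the step I expect to require the most care; the alternative is a fully direct combinatorial check that every $(k-1)$-face of $D_\sigma$ is either internal (shared by exactly two top $k$-simplices) or lies on $\partial f(\sigma)$.

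Disjointness of the fillings is then immediate from the construction: all vertices of $D_\sigma$ outside $A$ lie in $W_\sigma$, and the $W_\sigma$ are pairwise disjoint. Hence for $\sigma \neq \sigma'$, every face of $D_\sigma \cap D_{\sigma'}$ has all its vertices in $f(\sigma) \cap f(\sigma')$, which is a proper face of both $f(\sigma)$ and $f(\sigma')$ and already lies in the common $(k-1)$-skeleton on $A$. Consequently, $\bigcup_\sigma D_\sigma$ together with the $(k-1)$-skeleton on $A$ is a subdivision of $K_t^k$ inside $X$, as required.
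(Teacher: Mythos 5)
Your proposal is essentially the paper's own proof: the paper likewise takes a path in $G_F[W_\sigma]$ from $v$ to $w$, adjoins the $k$-faces $H\cup\{a,w\}$ for $H\in\binom{F}{k-1}$ and the face $F\cup\{v\}$, observes that this yields a triangulated $k$-ball filling the $k$-sphere $\partial f(\sigma)$, and gets disjointness of the fillings from the pairwise disjoint sets $W_\sigma$. Your explicit verification via the join $\partial F * P$ merely spells out (correctly) what the paper asserts with a figure, so the approach and conclusion coincide.
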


\begin{proof}
For $\sigma \in \binom{[t]}{k+1}$ and $a$, $F$, $v$, $w$ as above there exists a path in $G_F[W_\sigma]$ between $v$ and $w$ which together with the $k$-faces $\{a,w\} \cup H$ for $H \in \binom{F}{k-1}$  and the $k$-face  $F \cup \{v\}$, creates a $k$-dimensional disk filling the $k$-sphere (boundary of a $k$-simplex) created by the $(k-1)$-faces $F' \subset f(\sigma)$, $|F'|=k$. By choosing a distinct $W_\sigma$ for each $\sigma \in \binom{[t]}{k+1}$ we ensure disjoint fillings.
 See Figure~\ref{fig:containment} for an illustration for the case $k=2$.
\end{proof}

\paragraph{Random Complexes.}
We now proceed to show that for a suitable constant $c=c(t,k)$ and $p\geq \sqrt[k]{c/n}$ the random complex $X^k(n,p)$ a.a.s.~satisfies the conditions of Lemma~\ref{Lemma:BasicIdea}.
 We first give a criterion for complexes satisfying these conditions  and then show that this criterion is satisfied a.a.s.~by a random complex.

For fixed $F \subset U$, $|F|=k$ and $\sigma \in \binom{[t]}{k+1}$, call $u \in U \setminus F$ \emph{connected to $C_F^\sigma$} if $\{u,w\} \in G_F$ for some $w \in C_F^\sigma$ and let $N_F^\sigma = \{u \in U\!\setminus\!F\!: u \text{ connected to } C_F^\sigma\}$.
Consider two families of $k$-complexes $X$ with vertex set $[n]$ and complete $(k-1)$-skeleton:
\begin{itemize}
\item \change{$\mathcal{A}_{F,\sigma} = \{X\subseteq \binom{[n]}{k+1}: \binom{[n]}{k} \subseteq X, \exists v \in C_F^\sigma \text{ with }F \cup \{v\} \in X\}$}{\protect$\mathcal{A}_{F,\sigma} = \{X\subseteq K_n^k: K_n^{k-1} \subseteq X, \exists v \in C_F^\sigma \text{ with }F \cup \{v\} \in X\}$}.
\item For $\delta >0$: \change{$\mathcal{B}_{F,\sigma,\delta} = \{X\subseteq \binom{[n]}{k+1}: \binom{[n]}{k} \subseteq X, |N_F^\sigma| \geq (1-\delta)(|U|-k)\}$}{\protect$\mathcal{B}_{F,\sigma,\delta} = \{X\subseteq K_n^k: K_n^{k-1} \subseteq X, |N_F^\sigma| \geq (1-\delta)(|U|-k)\}$}.
\end{itemize}

\begin{lemma}\label{Lemma:ExistenceGoodSet}
Let $X$ be a $k$-complex with vertex set $[n]$ and complete $(k-1)$-skeleton.
If there is a $\delta<1\bigl/\bigl(\binom{t}{k+1}(k+1)\bigr)$ such that $X \in \mathcal{A}_{F,\sigma} \cap \mathcal{B}_{F,\sigma,\delta}$ for all $F \subset U$, $|F|=k$ and  $\sigma \in \binom{[t]}{k+1}$,
then there exists a set $A\!\in\!\binom{U}{t}$ satisfying the conditions of Lemma~\ref{Lemma:BasicIdea}.
\end{lemma}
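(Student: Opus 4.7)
My plan is to produce the desired $A$ and $f$ by the probabilistic method. Draw $A\in\binom{U}{t}$ uniformly at random together with a uniformly random bijection $f:[t]\to A$. For each $\sigma\in\binom{[t]}{k+1}$ let $B_\sigma$ denote the ``bad event'' that every $a\in f(\sigma)$ satisfies $a\notin N_{f(\sigma)\setminus\{a\}}^\sigma$. Since the condition $X\in\mathcal{A}_{F,\sigma}$ directly supplies, for every $F$ and $\sigma$, the vertex $v\in C_F^\sigma$ with $F\cup\{v\}\in X$ required by Lemma~\ref{Lemma:BasicIdea}, it suffices to exhibit some $(A,f)$ for which no $B_\sigma$ occurs.

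The central estimate I need is $\Pr[B_\sigma]\leq\delta$ for each fixed $\sigma$. Observe that $\tau:=f(\sigma)$ is distributed uniformly on $\binom{U}{k+1}$, and $B_\sigma$ says that for every $a\in\tau$ the element $a$ lies in the ``bad set'' $(U\setminus F)\setminus N_F^\sigma$ with $F=\tau\setminus\{a\}$, which by $X\in\mathcal{B}_{F,\sigma,\delta}$ has size at most $\delta(|U|-k)$. I would then double-count the pairs $(a,F)$ with $F\in\binom{U}{k}$ and $a\in(U\setminus F)\setminus N_F^\sigma$: this count is at most $\binom{|U|}{k}\,\delta(|U|-k)$, and on the other hand every $\tau$ for which $B_\sigma$ holds contributes exactly $k+1$ such pairs. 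Rearranging gives that the number of bad $\tau$ is at most $\delta\binom{|U|}{k+1}$, hence $\Pr[B_\sigma]\leq\delta$.

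Linearity of expectation then yields $\E[\#\{\sigma:B_\sigma\text{ occurs}\}]\leq\binom{t}{k+1}\delta<1$ under the hypothesis $\delta<1/(\binom{t}{k+1}(k+1))$ (with slack: the argument would already go through for $\delta<1/\binom{t}{k+1}$). Consequently there exists a deterministic choice of $A$ and $f$ for which every $\sigma\in\binom{[t]}{k+1}$ admits some $a\in f(\sigma)$ with $a\in N_F^\sigma$ for $F=f(\sigma)\setminus\{a\}$. Unpacking the definition of $N_F^\sigma$ furnishes $w\in C_F^\sigma$ with $\{a,w\}\in G_F$, and $X\in\mathcal{A}_{F,\sigma}$ furnishes $v\in C_F^\sigma$ with $F\cup\{v\}\in X$, verifying the hypothesis of Lemma~\ref{Lemma:BasicIdea}.

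The only non-routine step is the double-count bounding $\Pr[B_\sigma]$ by $\delta$ rather than by a naive $\Pr[a\text{ bad for some fixed }a]$ style estimate: it is precisely the cancellation of the extra factor $k+1$ (coming from the $k+1$ possible choices of $a\in\tau$) that makes the single-$\sigma$ probability small enough to survive the union bound over the $\binom{t}{k+1}$ many $\sigma$'s. Everything else is standard linearity of expectation.
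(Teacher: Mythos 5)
Your proof is correct. It runs in the same probabilistic-method framework as the paper's: pick the labelled $t$-set (equivalently, a uniformly random injection $[t]\rightarrow U$) at random, observe that $X\in\mathcal{A}_{F,\sigma}$ handles the first condition of Lemma~\ref{Lemma:BasicIdea} for every possible choice, and show the failure probability for the second condition is strictly below $1$. Where you differ is in the per-$\sigma$ estimate. The paper fixes $\sigma$ and a single index $j\in\sigma$, conditions on $f(\sigma\setminus\{j\})=F$ to get $\Pr\bigl[f(j)\notin N_F^\sigma\bigr]\leq\delta$, and then takes a union bound over all $(k+1)\binom{t}{k+1}$ pairs $(\sigma,j)$ --- this is exactly where the hypothesis $\delta<1\bigl/\bigl(\binom{t}{k+1}(k+1)\bigr)$ is consumed. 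You instead bound the probability that \emph{all} $a\in f(\sigma)$ fail, via the double count of pairs $(a,F)$; this is sound, since each pair determines $\tau=F\cup\{a\}$ (so pairs arising from distinct bad $(k+1)$-sets are distinct), $\binom{|U|}{k}(|U|-k)=(k+1)\binom{|U|}{k+1}$, and $f(\sigma)$ is indeed uniform on $\binom{U}{k+1}$. The payoff is that you only need $\binom{t}{k+1}\delta<1$, i.e.\ the factor $k+1$ in the lemma's hypothesis on $\delta$ is not actually needed; the paper's cruder union bound over the $k+1$ choices of $a$ is what forces it. In the context of the paper this gain is cosmetic --- it only loosens the constraint $e^{-c(1-\epsilon)/(2T)}<\delta$ in the following lemma and hence slightly improves the constant $c(t,k)$ --- but your version is a genuinely sharper (and equally short) argument, and the hypothesis as stated implies your weaker requirement, so the lemma follows as claimed.
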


\begin{proof}
 We need to show the existence of a set $A \subset U$, $|A|=t$ with a bijection $f\!\!:\!\![t]\rightarrow A$ such that for every $\sigma \in \binom{[t]}{k+1}$ there is a vertex $a \in f(\sigma)$ such that for $F=f(\sigma)\!\setminus\!\{a\}$:
\begin{enumerate}
 \item There is $v \in C_F^\sigma$ with $F \cup \{v\} \in X$.
 \item The vertex $a$ is connected to $C_F^\sigma$. 
\end{enumerate}
As $X \in \mathcal{A}_{F,\sigma}$ for all $F$ and $\sigma$, the first condition holds for any choice of $A$,$f$,$\sigma$ and $a$. So we only need to deal with the second condition.
We consider tupels $(a_1,a_2,\ldots,a_t)$ with $a_i \in U$ and all $a_i$ pairwise distinct and let $A = \{a_1,a_2,\ldots,a_t\}$. The function $f$ is then determined by $f(i) = a_i$. We show that for a tupel chosen uniformly at random we have
$$\Pr\left[\exists \sigma\in \tbinom{[t]}{k+1}, a \in f(\sigma): a \text{ not connected to } C_{f(\sigma)\!\setminus\!\{a\}}^\sigma\right] < 1.$$
Thus, there is a tuple that also satisfies the second condition.
For fixed $\sigma$ and $j\in\sigma$:
\begin{equation*}
\begin{split}
 \Pr&\left[f(j) \text{ not connected to } C_{f(\sigma\!\setminus\!\{j\})}^\sigma\right]\\
&= \sum\nolimits_{F \in \binom{U}{k}} \Pr\left[f(j) \notin N_F^\sigma\,|\,f(\sigma\!\setminus\!\{j\})=F\right]\cdot\Pr\left[f(\sigma\!\setminus\!\{j\})=F\right]\\
&= \tbinom{|U|}{k} \cdot \delta \cdot \tfrac{1}{\binom{|U|}{k}} = \delta
\end{split}
\end{equation*}
By a union bound, we hence have 
$$\Pr\left[\exists \sigma, a \in f(\sigma): a \text{ not connected to } C_{f(\sigma)\!\setminus\!\{a\}}^\sigma\right] \leq \tbinom{t}{k+1}(k+1)\delta < 1.$$

\end{proof}
Now, we finally turn to random complexes $X^k(n,p)$.
As the property of containing \change{a fixed subcomplex}{\protect a given topological minor} is monotone (preserved under adding simplices), it is enough to consider the case $p=\sqrt[k]{c/n}$.
\begin{lemma}
For every $k\geq 2$ and $t \geq k+1$ there is a constant $c=c(t,k)>0$ such that for $p= \sqrt[k]{\frac{c}{n}}$ the random complex $X^k(n,p)$ asymptotically almost surely satisfies the conditions of Lemma~\ref{Lemma:ExistenceGoodSet}.
\end{lemma}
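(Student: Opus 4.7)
The plan is to verify, for each fixed $F\in\binom{U}{k}$ and $\sigma \in \binom{[t]}{k+1}$, that the bad event $\mathcal{A}_{F,\sigma}^c \cup \mathcal{B}_{F,\sigma,\delta_0}^c$ holds with probability super-polynomially small in $n$, where $\delta_0$ is any fixed constant strictly below $1/(\binom{t}{k+1}(k+1))$. A union bound over the at most $\binom{n/2}{k}\cdot\binom{t}{k+1}=O(n^k)$ pairs $(F,\sigma)$ then finishes the proof. The crucial observation is that a pair $\{u,w\}\subseteq [n]\setminus F$ is an edge of $G_F$ iff the $k$ $k$-faces $H\cup\{u,w\}$ with $H\in\binom{F}{k-1}$ all lie in $X$; since different edges $\{u,w\}$ use disjoint sets of such $k$-faces, the edges of $G_F$ are mutually independent, each present with probability $p^k$. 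In particular $G_F[W_\sigma]$ is distributed as $G(n/(2\binom{t}{k+1}),\, c/n)$, which is supercritical once $c>2\binom{t}{k+1}$; Theorem~\ref{THM:PhaseTransition} then provides $\alpha=\alpha(c,t,k)>0$ with
\[
\Pr\bigl[|C_F^\sigma| < \alpha n\bigr] \leq e^{-\kappa n^{1/3}}.
\]

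Conditioning on $C_F^\sigma$, I would then treat $\mathcal{A}_{F,\sigma}$ and $\mathcal{B}_{F,\sigma,\delta_0}$ separately. For $\mathcal{A}_{F,\sigma}$ the relevant $k$-faces $F\cup\{v\}$ contain \emph{all} $k$ vertices of $F$, while the $k$-faces that determine $G_F[W_\sigma]$ contain exactly $k-1$ vertices of $F$; these two collections are disjoint, so conditional on $C_F^\sigma$ the events $\{F\cup\{v\}\in X\}$, $v\in C_F^\sigma\subseteq W_\sigma$, remain independent with probability $p$, yielding
\[
\Pr\bigl[\mathcal{A}_{F,\sigma}^c \bigm| |C_F^\sigma|\geq\alpha n\bigr] \leq (1-p)^{\alpha n} \leq \exp\bigl(-\alpha\, c^{1/k}\, n^{(k-1)/k}\bigr).
\]
For $\mathcal{B}_{F,\sigma,\delta_0}$, a vertex $u\in U\setminus F$ fails to lie in $N_F^\sigma$ precisely when no pair $\{u,w\}$ with $w\in C_F^\sigma$ is an edge of $G_F$, an event of conditional probability $(1-p^k)^{|C_F^\sigma|}\leq e^{-c\alpha}$. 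For distinct $u,u'\in U\setminus F$, these failure events involve disjoint families of $k$-faces (each such face contains the particular vertex $u$), so they are conditionally independent. Enlarging $c$ if necessary so that $e^{-c\alpha}<\delta_0/2$, Chernoff's inequality (Theorem~\ref{Chernoff}) then gives $\Pr[\mathcal{B}_{F,\sigma,\delta_0}^c\mid|C_F^\sigma|\geq\alpha n]\leq e^{-\Omega(n)}$.

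Since $(k-1)/k\geq 1/2>1/3$, all three estimates decay faster than any polynomial, and the union bound over the $O(n^k)$ pairs $(F,\sigma)$ yields total failure probability $o(1)$. The step I expect to require the most care is not any one of the probabilistic bounds but rather the bookkeeping of independence: one must verify that the three kinds of random $k$-faces involved—those determining edges inside $G_F[W_\sigma]$ (type $(k-1,2,0)$ with respect to the partition $F,W_\sigma,U\setminus F$), those determining $U$-to-$W_\sigma$ edges of $G_F$ (type $(k-1,1,1)$), and the faces $F\cup\{v\}$ from $\mathcal{A}$ (type $(k,1,0)$)—are pairwise disjoint, so that conditioning on $C_F^\sigma$ biases neither the $\mathcal{A}$- nor the $\mathcal{B}$-events. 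This is exactly what the preliminary partition of the vertex set into $U$ and the disjoint pools $W_\sigma$ was designed to guarantee.
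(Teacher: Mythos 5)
Your proposal is correct and follows essentially the same route as the paper: condition on the giant component of $G_F[W_\sigma]\sim G(n/(2\binom{t}{k+1}),c/n)$ being of linear size via Theorem~\ref{THM:PhaseTransition}, use the disjointness of the three families of $k$-faces to bound $\Pr[\mathcal{A}_{F,\sigma}^c]$ and $\Pr[\mathcal{B}_{F,\sigma,\delta}^c]$ conditionally (the paper uses Chernoff where you use the direct bound $(1-p)^{\alpha n}$, and chooses $c$ explicitly so that $e^{-c(1-\epsilon)/(2T)}<\delta<1/(T(k+1))$ where you ``enlarge $c$'', which is legitimate since $\alpha$ is non-decreasing in $c$), and finish with a union bound over the $O(n^k)$ pairs $(F,\sigma)$.
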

\begin{proof}
Let $k\geq 2$ and $t \geq k+1$. Let $T = \binom{t}{k+1}$.  
We show that there is $c>2T$ and $\delta<1/(T(k+1))$ such that $X^k(n,p)$ for $p=\sqrt[k]{c/n}$ a.a.s.~satisfies the conditions of Lemma~\ref{Lemma:ExistenceGoodSet}, i.e.,
$$X^k(n,p)\in \bigcap_{F,\sigma} \mathcal{A}_{F,\sigma} \cap \mathcal{B}_{F,\sigma,\delta}.$$

Fix $F \subset U$, $|F|=k$ and  $\sigma \in \binom{[t]}{k+1}$.
The probability of the events  $\mathcal{A}=\mathcal{A}_{F,\sigma}$ and $\mathcal{B}=\mathcal{B}_{F,\sigma,\delta}$ depends on the size of $C_F^\sigma$, the largest connected component of the graph $G_F[W_\sigma]$, which is a random graph of type $G(|W_\sigma|,p^k)$.

As $p^k=c/n = \frac{c/\left(2T\right)}{|W_\sigma|}$, for $c$ large enough the graph $G_F[W_\sigma]$ \emph{fails} to have a giant component of size linear in $|W_\sigma|$ with exponentially small probability:
\remove{For every  $\gamma >0$ a random graph $G(n,\frac{1 + \gamma}{n})$ has a connected component of size at least $\frac{\gamma^2n}{5}$ with probability $1-e^{-\kappa n}$ for some $\kappa = \kappa(\gamma) >0$ (see e.g,~\protect \cite{KrivelevichSudakov:2012}).}
\change{So}{\protect By Theorem~\ref{THM:PhaseTransition}} for any $c > 2T$ there are $\epsilon>0$ and $\kappa >0$ such that
\change{\protect
 $$
\Pr[|C_F^\sigma| \geq (1-\epsilon)|W_\sigma|] \geq 1- e^{-\kappa n}.
$$}
{\protect
$$
\Pr[|C_F^\sigma| \geq (1-\epsilon)|W_\sigma|] \geq 1- e^{-\kappa n^{1/3}}.
$$}
As we will later need that $\delta > e^{-\frac{c(1-\epsilon)}{2T}}$, we choose $c > \frac{\ln(T) + \ln(k+1)}{1-\epsilon} \cdot 2T> 2T$. Then $e^{-\frac{c(1-\epsilon)}{2T}} < \frac{1}{T(k+1)}$ and we choose $\delta$ with $e^{-\frac{c(1-\epsilon)}{2T}} < \delta < \frac{1}{T(k+1)}$.

For  $S \subset W_\sigma$ denote by $\Pr_S$ the conditional probability when conditioning on $C_F^\sigma = S$.
Then $\Pr[\mathcal{A}_{F,\sigma} \cap \mathcal{B}_{F,\sigma,\delta}]$ is at least
$$
\sum_{S} \Pr\nolimits_S\left[\mathcal{A}_{F,\sigma} \cap \mathcal{B}_{F,\sigma,\delta}\right] \cdot \Pr\left[C_F^\sigma = S\right],
$$
where the sum runs over all $S \subset W_\sigma$ with $|S| \geq (1-\epsilon)|W_\sigma|$.

As $\mathcal{A}_{F,\sigma}$ and $\mathcal{B}_{F,\sigma,\delta}$ depend on different kinds of $k$-faces and the presences of $k$-faces are 
decided independently, we have 
$$\Pr\nolimits_S\left[\mathcal{A}_{F,\sigma} \cap \mathcal{B}_{F,\sigma,\delta}\right] = \Pr\nolimits_S\left[\mathcal{A}_{F,\sigma}\right] \cdot \Pr\nolimits_S\left[\mathcal{B}_{F,\sigma,\delta}\right].$$
We consider the two terms seperately:

\leftskip=.4cm
\paragraph{$\Pr\nolimits_S\left[\mathcal{A}_{F,\sigma}\right]$:} Here we consider $\Pr\left[\exists v \in S \text{ with } F\cup\{v\} \in X\right]$. The number $f(X)$ of vertices $v \in S$ with $F\cup\{v\} \in X$ is a binomially distributed variable with parameters $|S|$ and $p$. Hence, its expectation is $|S|p$ and by Chernoff's inequality, Theorem~\ref{Chernoff}: $$\Pr\left[f(X) = 0\right] \leq e^-{\frac{|S|p}{2}} \leq e^{-(1-\epsilon)\frac{\sqrt[k]{c}}{4T}n^{1-1/k}}.$$

\paragraph{$\Pr\nolimits_S\left[\mathcal{B}_{F,\sigma,\delta}\right]$:} Call $u \in U\!\setminus\!F$ \emph{connected to $S$} if $\{u,w\} \in G_F$ for some $w \in S$. Then we need to consider 
$$\Pr\left[|\{u \in U\!\setminus\!F : u \text{ connected to } S\}| \geq (1-\delta)(|U|-k)\right].$$
For fixed $u \in U\!\setminus\!F$ the probability \emph{not} to be connected to $S$ is $\lambda = (1-p^k)^{|S|} \leq e^{-p^k|S|} \leq e^{-\frac{c(1-\epsilon)}{2T}}$.
For each $u$ the decisions over the $k$-faces deciding whether $u$ is connected to $S$ are taken independently. Hence, also the number $g(X)$ of vertices $u \in U\!\setminus\!F$ that are connected to $S$ is a binomially distributed variable with parameters $(|U|-k)$ and $(1-\lambda)$.
As we chose $\delta > e^{-\frac{c(1-\epsilon)}{2T}} \geq \lambda$, we get by Chernoff's inequality, Theorem~\ref{Chernoff}, for large enough $n$:
\begin{equation*}
\begin{split}
 \Pr\left[g(X) < (1-\delta)(|U|-k)\right] &= \Pr\left[g(X) < \E[g(X)] - (\delta - \lambda)(|U|-k)\right]\\
 &\leq e^{-\frac{(\delta - \lambda)^2(|U|-k)}{2(1-\lambda)}} \leq e^{-\frac{(\delta - \lambda)^2}{5}n}.
 \end{split}
\end{equation*}

\leftskip=0cm
Notice that the probabilities $\Pr\nolimits_S\left[\mathcal{A}_{F,\sigma}\right]$ and $\Pr\nolimits_S\left[\mathcal{B}_{F,\sigma,\delta}\right]$ do not depend on $S$.
Hence we can use $\sum_S \Pr\left[C_F^\sigma = S\right]=\Pr\left[|C_F^\sigma| \geq (1-\epsilon)|W_\sigma|\right]$ and get by the choice of $c$ and $\epsilon$:
\change{\protect
$$  \Pr[\mathcal{A}_{F,\sigma} \cap \mathcal{B}_{F,\sigma,\delta}] \geq \left(1-e^{-(1-\epsilon)\frac{\sqrt[k]{c}}{4T}n^{1-1/k}}\right)\left(1-e^{-\frac{(\delta - \lambda)^2}{5}n}\right)\left(1-e^{-\kappa n}\right) \geq 1 - e^{-\beta n^{1-1/k}}$$}
{\protect
\[\begin{split}
 \Pr[\mathcal{A}_{F,\sigma} \cap \mathcal{B}_{F,\sigma,\delta}] &\geq \left(1-e^{-(1-\epsilon)\frac{\sqrt[k]{c}}{4T}n^{1-1/k}}\right)\left(1-e^{-\frac{(\delta - \lambda)^2}{5}n}\right)\left(1-e^{-\kappa n^{1/3}}\right)\\
&\geq 1-e^{-(1-\epsilon)\frac{\sqrt[k]{c}}{4T}n^{1-1/k}}-e^{-\frac{(\delta - \lambda)^2}{5}n}-e^{-\kappa n^{1/3}}\geq 1 - 3e^{-\kappa n^{1/3}}.
 \end{split}
 \]}
\remove{for some $\beta>0$.} Applying a union bound, we get for some $\alpha > 0$:
\change{\protect
$$
\Pr\left[\exists F \subset U,|F|=k, \sigma \in \tbinom{[t]}{k+1}: \neg \mathcal{A}_{F,\sigma} \cup \neg \mathcal{B}_{F,\sigma,\delta}\right] \leq \tbinom{n/2}{k}\cdot T \cdot e^{-\beta n^{1-1/k}} \leq e^{-\alpha n^{1-1/k}}.
$$}
{\protect
$$
\Pr\left[\exists F \subset U,|F|=k, \sigma \in \tbinom{[t]}{k+1}: \neg \mathcal{A}_{F,\sigma} \cup \neg \mathcal{B}_{F,\sigma,\delta}\right] \leq \tbinom{n/2}{k}\cdot T \cdot 3e^{-\kappa n^{1/3}} \leq e^{-\alpha n^{1/3}}.
$$}
\end{proof}
\section{The Lower Bound for Topological Minor Containment}\label{Sec - 0statement}

We now turn to  the proof of Theorem~\ref{THM - 0statement} on random $2$-complexes $X^2(n,p)$. Our goal is to show the existence of a constant $c\leq 1$  such that for $p=\sqrt{c/n}$ the probability to find a subdivision of $K^2_t$ converges to zero.
The proof bases on the following simple observation: If a complex contains a subdivision of $K_t^2$ with $t\geq 10$, it also contains a subdivision of a triangulation of $\Sigma_2$, the orientable surface of genus $2$\footnote{The smallest possible triangulation of $\Sigma_2$ has $10$ vertices, see \cite{Huneke:1978}.}. We then use that the number of triangulations of any surface with a fixed number $l$ of vertices is known to be at most simply exponential in $l$.

Bounds on the number of triangulations of a fixed closed surface can be drawn from the theory of enumeration of maps on surfaces which has its beginning in Tutte's famous results on the number of rooted maps on the sphere \cite{Tutte:1963, Tutte:1968, Tutte:1973}.
As the terminology in these references differs a lot from ours and as furthermore the classes of objects that are counted are not exactly the same, we first explain in detail the enumeration result we will use.
We rely on \cite{BenderCanfield:1986,BenderCanfieldRobinson:1988,BenderRichmond:1986,Gao:1991}.

\paragraph{Maps on Surfaces.}
Let $S$ be a connected compact $2$-manifold without boundary. A \emph{map} $M=(S,G,\Phi)$ on $S$ is a graph $G$ together with an embedding $\Phi$ of $G$ into $S$ such that each connected component of $S \setminus \Phi(G)$ is simply connected, i.e., each face is a disk.
Graphs are unlabeled, finite and connected, loops and multiple edges are allowed.

A map is \emph{rooted} if an edge, a direction along the edge and a side of the edge are distinguished.
An edge is called \emph{double} if its image belongs to the boundary of only one face. Any other, \emph{single}, edge belongs to two faces.
The \emph{valency} of a face is the number of single edges in its boundary plus twice the number of double edges.
A \emph{triangular map} is a map such that each face has valency three.

Two maps $(S,G,\Phi)$ and $(S',G',\Phi')$ are considered \emph{equivalent} if there is a homeomorphism $h\!:\!S \rightarrow S'$ and a graph isomorphism $g\!:\!G \rightarrow G'$ such that $h\Phi=\Phi'g$.

\paragraph{Triangular Maps vs. Triangulations.}
Let $M=(S,G,\Phi)$ be a triangular map such that the graph $G=(V,E)$ is simple, i.e., does not have loops or multiple edges. Then every face of $M$ has a boundary consisting of exactly three edges. Define a $2$-complex $X(M) =(V,E,T(M))$ by letting 
$$T(M) := \{\{u,v,w\}: u,v,w \in V \text{ are the vertices of a face of }M\}.$$
Equivalent maps yield isomorphic complexes: 

\begin{lemma}\label{EquivalentMaps}
 Let $M=(S,G,\Phi)$ be a triangular map such that the graph $G=(V,E)$ is simple and let $M'=(S',G',\Phi')$ be equivalent to $M$. Then $X(M)$ and $X(M')$ are isomorphic.
\end{lemma}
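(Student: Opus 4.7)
My plan is to take the simplicial map induced by $g|_V \colon V \to V'$ as the candidate isomorphism $X(M) \to X(M')$. Since $g \colon G \to G'$ is a graph isomorphism, it restricts to a bijection $V \leftrightarrow V'$ and maps edges to edges, so it already witnesses that the $1$-skeletons of $X(M)$ and $X(M')$ agree. The only thing left is to check that the induced map on triples carries the triangle set $T(M)$ bijectively onto $T(M')$.

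For this I would use the homeomorphism $h \colon S \to S'$ to transport the face structure. The compatibility relation $h\Phi = \Phi' g$ implies $h(\Phi(G)) = \Phi'(G')$, so $h$ restricts to a homeomorphism $S \setminus \Phi(G) \to S' \setminus \Phi'(G')$ and in particular induces a bijection between the connected components of these open sets, i.e., between the faces of $M$ and the faces of $M'$. If $F$ is a face of $M$ with vertex set $\{u,v,w\}$, then a vertex $x \in V$ lies on the closure of $F$ iff $\Phi(x)$ lies in the closure of $F$ in $S$; applying $h$ and using $h\Phi = \Phi' g$ shows that $\Phi'(g(x))$ lies in the closure of $h(F)$. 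Hence the vertex set of the face $h(F)$ of $M'$ is $\{g(u), g(v), g(w)\}$, so $\{g(u), g(v), g(w)\} \in T(M')$. Running the same argument with $h^{-1}$ and $g^{-1}$ yields the reverse inclusion, so $g|_V$ extends to the required simplicial isomorphism $X(M) \to X(M')$.

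I do not expect serious difficulty; the only bookkeeping point is that, even though $G$ is simple, a triangular map can still have a face that uses a ``double edge'' in the map sense and thereby touches fewer than three distinct vertices. Such a face would contribute nothing to $T(M)$. However, whether an edge is single or double (i.e., bounds one or two faces) is preserved by the face bijection induced by $h$, so a face of $M$ has three distinct vertices iff the corresponding face of $M'$ does. The bijection $T(M) \to T(M')$ is therefore well-defined, and the lemma follows.
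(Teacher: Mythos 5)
Your proposal is correct and follows essentially the same route as the paper: take $g$ itself as the candidate isomorphism and use $h$ together with the compatibility relation $h\Phi=\Phi'g$ to match each face of $M$ with a face of $M'$ having the image vertices on its boundary, arguing symmetrically for the reverse inclusion. Your extra remark about faces with double edges is harmless additional bookkeeping (with $G$ simple such faces do not actually arise), and does not change the argument.
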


\begin{proof}
 Since $M$ and $M'$ are equivalent, there is a homeomorphism $h\!:\!S \rightarrow S'$ and a graph isomorphism $g\!:\!G \rightarrow G'$ such that $h\Phi=\Phi'g$.
We show that $g$ is also an isomorphism between $X(M)$ and $X(M')$. As $g$ is a graph isomorphism, all we need to show is that $g$ preserves $2$-faces. Let $\{u,v,w\} \in T(M)$, so $u,v,w$ are the vertices of a face of $M$. This face is mapped to some disk in $S'$ by $h$. As $h\Phi = \Phi'g$, this disk is a face of $M'$ with $g(u),g(v)$ and $g(w)$ as boundary vertices. Hence, $\{g(u),g(v),g(w)\} \in T(M')$. The same argument shows that any $2$-face of $X(M')$ is mapped to a $2$-face of $X(M)$.
\end{proof}

For a $2$-complex $X=(V,E,T)$ such that $\|X\|$ is homeomorphic to a surface $S$, define a triangular map $M(X) = (S, (V,E), \Phi)$, where $\Phi$ is the restriction of a homeomorphism $\|X\| \rightarrow S$ to the $1$-skeleton of $X$. The following lemma shows that $M(X)$ is well-defined and that isomorphic complexes give rise to equivalent maps:

\begin{lemma}\label{IsomorphicComplexes}
 Let $X=(V,E,T)$ be a $2$-complex such that $\|X\|$ is homeomorphic to a surface $S$ and let $X'=(V',E',T')$ be isomorphic to $X$.
 Let furthermore $\varphi\!:\!\|X\|\rightarrow S$ and $\varphi'\!:\!\|X'\|\rightarrow S$ be homeomorphisms and define $\Phi$ and $\Phi'$ to be the restrictions of $\varphi$ and $\varphi'$ to the $1$-skeleta of $X$ and $X'$, respectively. Then $(S, (V,E), \Phi)$ and $(S, (V',E'), \Phi')$ are equivalent.
\end{lemma}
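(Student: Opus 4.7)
The plan is to construct the homeomorphism $h:S\to S$ and the graph isomorphism $g:(V,E)\to(V',E')$ from the given simplicial isomorphism between $X$ and $X'$, using the standard fact that any simplicial isomorphism extends linearly (on each simplex) to a homeomorphism of the underlying polyhedra.

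More precisely, let $\iota:X\to X'$ be a face-preserving bijection. Its restriction to $V$ is a graph isomorphism $g:(V,E)\to(V',E')$, and $\iota$ extends canonically to a simplicial homeomorphism $|\iota|:\|X\|\to\|X'\|$ by interpolating affinely across each simplex in a geometric realization. I would then define
\[
h \;:=\; \varphi'\circ|\iota|\circ\varphi^{-1}\;:\;S\to S,
\]
which is a homeomorphism as a composition of three homeomorphisms. In particular, this already shows that the space $S$ on which the map is realized is determined (up to homeomorphism) by the abstract complex, so $M(X)$ is well-defined up to equivalence.

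It remains to verify the equivalence relation $h\Phi=\Phi'g$. After identifying the abstract graph $(V,E)$ with the polyhedron $\|X^{(1)}\|$ of its topological realization (and similarly for $(V',E')$), the map $\Phi$ is by definition $\varphi|_{\|X^{(1)}\|}$, and under this identification the graph isomorphism $g$ is nothing but $|\iota|$ restricted to $\|X^{(1)}\|$, which does indeed land in $\|X'^{(1)}\|$ since $\iota$ maps edges to edges. Substituting,
\[
h\circ\Phi \;=\; \varphi'\circ|\iota|\circ\varphi^{-1}\circ\varphi|_{\|X^{(1)}\|}
\;=\; \varphi'\circ|\iota|\bigr|_{\|X^{(1)}\|}
\;=\; \Phi'\circ g,
\]
which is the required relation. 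Thus $(S,(V,E),\Phi)$ and $(S,(V',E'),\Phi')$ are equivalent maps.

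The only subtle point, and thus the main thing that needs care, is the implicit identification between the abstract graph underlying a map and the topological 1-skeleton $\|X^{(1)}\|$ of the complex; one should observe that the simplicial isomorphism $|\iota|$ restricts to a homeomorphism of 1-skeleta whose combinatorial shadow is precisely $g$. Everything else is routine bookkeeping, using only that simplicial isomorphisms extend to simplicial homeomorphisms and that compositions of homeomorphisms are homeomorphisms.
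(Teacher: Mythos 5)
Your construction is exactly the paper's: take the simplicial isomorphism, use that its affine extension is a homeomorphism of polyhedra, and set $h=\varphi'\circ\|\iota\|\circ\varphi^{-1}$ and $g$ its restriction to the $1$-skeleton, then verify $h\Phi=\Phi'g$ by composition. The proposal is correct and matches the paper's proof, with a bit more explicit bookkeeping on the identification of the abstract graph with the topological $1$-skeleton.
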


\begin{proof}
 Let $f\!:\!V(X) \rightarrow V(X')$ be an isomorphism between $X$ and $X'$. Then the affine extension $\|f\|\!:\!\|X\|\rightarrow\|X'\|$ is a homeomorphism (see, e.g., \cite[Proposition~1.5.4]{Matousek:2003}). So $\|X'\|$ is also homeomorphic to $S$. Choosing $g=f$ and $h=\varphi'\|f\|\varphi^{-1}$, we get $h\Phi=\Phi'g$.
\end{proof}

Lemmas~\ref{EquivalentMaps} and \ref{IsomorphicComplexes} show that there is a bijection between equivalence classes of triangular maps with simple underlying graph on a surface $S$ and isomorphism classes of $2$-complexes with polyhedron homeomorphic to $S$.

\paragraph{The Number of Triangulations.}
In \cite{Gao:1991} Gao gives an asymptotic enumeration result for rooted triangular maps on any closed surface.

\begin{theorem}[\add{\protect Gao, \cite{Gao:1991}}]\label{GaosResult}
Let $T_g(l)$ denote the number of $l$-vertex rooted triangular maps on the orientable surface of genus $g$.\footnote{Gao also considers non-orientable surfaces, in which we are not interested here.}
There is a constant $t_g$, independent of $l$, such that for $l \rightarrow \infty$,
\[T_g(l) \sim t_g l^{5(g-1)/2} (12 \sqrt{3})^l.\]
\end{theorem}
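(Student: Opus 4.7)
The plan is to apply the generating function methods of Tutte, extended to higher genus via topological/loop-equation techniques developed by Bender, Canfield, Richmond and others. Write $T_g(x) := \sum_{l \geq 0} T_g(l)\, x^l$. The strategy is to locate the common dominant singularity of all $T_g$ at $x_c = 1/(12\sqrt{3})$ and then determine the precise local behaviour of $T_g$ near $x_c$; the asymptotic statement will then follow by standard singularity analysis.

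For the base case $g=0$ (the sphere), I would use Tutte's classical root-edge deletion: marking the root face and decomposing after removing the root edge produces an algebraic equation satisfied by an auxiliary bivariate series tracking root-face valency, from which $T_0(x)$ can be recovered explicitly. Identifying the resulting square-root type singularity at $x_c$ and applying the Flajolet--Odlyzko transfer theorem then yield $T_0(l) \sim t_0\, l^{-5/2} (12\sqrt{3})^l$, matching the formula for $g=0$.

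For $g \geq 1$ the main tool is a topological recursion, obtained by cutting along a simple loop (nonseparating or separating, depending on the case) through the root vertex. This expresses $T_g(x)$ schematically as a rational functional of $T_{g-1}(x)$, its derivatives in $x$, and the products $T_{g_1}(x)\,T_{g_2}(x)$ with $g_1+g_2=g$ and $g_i \geq 1$. Proceeding by induction on $g$, one checks that $T_g$ inherits the dominant singularity $x_c$ and that each genus step multiplies the leading singular factor by $(1 - x/x_c)^{-5/2}$, so that
\[
T_g(x) \sim c_g\,(1 - x/x_c)^{(3-5g)/2} \quad \text{as } x \to x_c^{-}.
\]
Applying singularity analysis converts this into the claimed $T_g(l) \sim t_g\, l^{5(g-1)/2} (12\sqrt{3})^l$, with the constant $t_g$ extracted from $c_g$ via the transfer formula.

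The main obstacle is the singularity bookkeeping in the recursive step: one must verify that the topological recursion preserves the singular structure at each stage, that no anomalous cancellation occurs in the leading term, and that a uniform analytic-continuation estimate on a suitable indented domain around $x_c$ holds so that the Flajolet--Odlyzko transfer theorem is applicable. This technical core is precisely what Gao's paper carries out in detail, building on the loop-equation machinery of Bender and Canfield for orientable surfaces.
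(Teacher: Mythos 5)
This theorem is not proved in the paper at all: it is quoted verbatim as an external result, attributed to Gao \cite{Gao:1991}, and then only the weaker Corollary~\ref{NumberTriangulations} ($\tau_g(l)\leq K_g^l$) is extracted from it, which is all that the proof of Theorem~\ref{THM - 0statement} actually needs. So there is no in-paper argument to compare yours against; the relevant question is whether your sketch could stand as an independent proof, and as written it cannot. Your roadmap is a fair description of how such results are obtained in the literature (Tutte's root-edge decomposition for the sphere, a genus-reduction scheme in the spirit of Bender--Canfield, and singularity analysis), and your exponent bookkeeping is internally consistent: a singular factor $(1-x/x_c)^{(3-5g)/2}$ does transfer to coefficients of order $l^{5(g-1)/2}x_c^{-l}$, and each genus step shifting the exponent by $-5/2$ matches the claimed formula.

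The genuine gap is that every decisive step is asserted rather than established: you do not derive the genus recursion for triangular maps (only a ``schematic'' functional relation), you do not verify that the common dominant singularity sits at $x_c=1/(12\sqrt{3})$ when counting by vertices, and you do not prove the non-cancellation of leading singular terms or the uniform analytic continuation needed for the Flajolet--Odlyzko transfer -- and you say explicitly that this ``technical core is precisely what Gao's paper carries out.'' That makes the proposal a citation dressed as a proof outline, which is exactly what the paper itself does more economically by simply invoking \cite{Gao:1991}. For the purposes of this paper that is acceptable (the theorem is a black box, and only the crude exponential bound is used downstream), but if the goal were a self-contained proof, the recursion and the singularity analysis for arbitrary genus would have to be carried out in full, not deferred.
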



We are interested in the number $\tau_g(l)$ of $l$-vertex triangulations of the orientable surface $\Sigma_g$ of genus $g$, i.e., the number of $2$-complexes $X=(V,E,T)$ such that $|V|=l$ and $\|X\|$ is homeomorphic to $\Sigma_g$. By the considerations above this is the number of triangular maps on $\Sigma_2$ with a simple underlying graph. As Gao's result also allows loops and multiple edges and makes a distinction between equivalent maps that are rooted in a different way, we get $\tau_g(l) \leq T_g(l)$ and hence:

\begin{corollary}\label{NumberTriangulations}
 Let $\tau_g(l)$ be the number of triangulations of $\Sigma_g$, the orientable surface of genus $g$, with $l$ vertices. There is a constant $K_g>0$, independent of $l$, such that $\tau_g(l) \leq K_g^l$.
\end{corollary}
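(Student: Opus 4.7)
The plan is to chain three estimates, turning Gao's asymptotic count of rooted triangular maps into an exponential bound on the number of simple triangulations. By Lemmas~\ref{EquivalentMaps} and \ref{IsomorphicComplexes} the assignment $X \mapsto M(X)$ descends to an injection from isomorphism classes of $l$-vertex triangulations of $\Sigma_g$ to equivalence classes of $l$-vertex triangular maps on $\Sigma_g$ whose underlying graph is simple. Hence $\tau_g(l)$ is at most the number of such equivalence classes.

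Second, I would simply drop the two constraints that separate triangulations from Gao's objects: simplicity of the underlying graph, and the identification of rootings. Allowing loops and multiple edges can only enlarge the set of equivalence classes of triangular maps, and every equivalence class with $l \geq 2$ has at least one rooting (pick any edge, direction and side), so the number of such equivalence classes is in turn bounded by the number $T_g(l)$ of rooted $l$-vertex triangular maps counted in Theorem~\ref{GaosResult}. Combining the two steps gives $\tau_g(l) \leq T_g(l)$.

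Third, by Theorem~\ref{GaosResult} we have $T_g(l) \sim t_g\, l^{5(g-1)/2} (12\sqrt{3})^l$ as $l\to\infty$. Fix any $K_g > 12\sqrt{3}$; then the polynomial factor $t_g\, l^{5(g-1)/2}$ is eventually dominated by $(K_g/(12\sqrt{3}))^l$, so $T_g(l) \leq K_g^l$ for all sufficiently large $l$. Enlarging $K_g$ once more (or absorbing finitely many small values of $l$ into the constant) yields the stated bound $\tau_g(l) \leq K_g^l$ for every $l$.

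There is no genuine obstacle here: the hard work is done by Gao's enumeration, and what remains is purely bookkeeping to pass between the map-enumeration conventions (rooted, possibly with loops and multiple edges) and the triangulation conventions (unrooted, simple). The only point worth verifying carefully is the \emph{direction} of the inequality $\tau_g(l) \leq T_g(l)$, i.e.\ that each simplification we perform on the map side can only increase the count; this is immediate from the three observations above (injectivity of $X \mapsto M(X)$ on isomorphism classes, monotonicity in the class of allowed graphs, and the surjectivity of ``forget the rooting'').
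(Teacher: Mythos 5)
Your proposal is correct and follows essentially the same route as the paper: it uses Lemmas~\ref{EquivalentMaps} and \ref{IsomorphicComplexes} to identify $l$-vertex triangulations of $\Sigma_g$ with equivalence classes of simple triangular maps, bounds these by the rooted, not-necessarily-simple maps counted in Theorem~\ref{GaosResult} to get $\tau_g(l)\leq T_g(l)$, and then absorbs the polynomial factor (and finitely many small $l$) into the constant $K_g$. No gaps; your explicit check that each relaxation only increases the count is exactly the point the paper makes more tersely.
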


\paragraph{Proof of the Lower Bound on the Threshold.}   Now that we have established (Corollary~\ref{NumberTriangulations}) that the number of triangulations of any fixed surface with a fixed number $l$ of vertices is at most simply exponential in $l$, we can turn to the proof of Theorem~\ref{THM - 0statement}.

\begin{proof}[Proof of Theorem~\ref{THM - 0statement}]
Fix $t \in \N$, $t \geq 10$ and let $T_0$ be a triangulation of $\Sigma_2$, the orientable surface of genus $2$,  with $10$ vertices.
As $T_0$ is a subcomplex of $K^2_t$, we have:
\[
\Pr\big[X^2(n,p) \text{ contains a subdiv.~of } K^2_t\big]
\leq \Pr\big[X^2(n,p) \text{ contains a subdiv.~of } T_0\big].
\]
We show that for sufficiently small $p$ the latter probability tends to $0$.  Ignoring that we only consider subdivisions of $T_0$, we get:
\[
\Pr\big[X^2(n,p) \text{ contains a subdivision of } T_0\big]
\!\leq\!\sum_{l=1}^{n} \sum_{T\in\mathcal{T}_l}\!\Pr\big[T\subseteq X^2(n,p)\big],
\]
where the second sum is over the set $\mathcal{T}_l$ of all triangulations of $\Sigma_2$ that have $l$ vertices.
Denote by $\tau_2(l)=|\mathcal{T}_l|$ the number of such triangulations and choose $K=K_2$ as in Corollary~\ref{NumberTriangulations} such that $\tau_2(l)$ is at most $K^l$. Let $p = \sqrt{c/n}$ for some $c < 1/K$.

By Euler's formula, every triangulation $T$ of the oriented surface $\Sigma_g$ of genus $g$ satisfies $f_2(T)=2(|V(T)|-2+2g) = 2(|V(T)|+2)$, if $g=2$, and $\Pr\big[T\subseteq X^2(n,p)\big] \leq n^{|V(T)|}p^{f_2(T)}$.
Hence,
\begin{align*}
\sum_{l=1}^{n}\!\sum_{T \in\mathcal{T}_l}\!\Pr\big[T\subseteq X^2(n,p)\big]
&\!\leq\! \sum_{l=1}^{n} \tau_2(l)\!\cdot\!n^lp^{2(l+2)}\\
&\!\leq\! \Big(\frac{c}{n}\Big)^{2}\sum_{l=1}^{n} (cK)^l
 \!=\! \Big(\frac{c}{n}\Big)^{2} \Big(\frac{1-(cK)^{n+1}}{1-cK} -1\Big),
\end{align*}
which clearly converges to zero as $n$ goes to infinity.
\end{proof}

\paragraph{Concluding Remarks.}
For the random $2$-complex $X^2(n,p)$, we have shown that the property of having the complete complex $K_t^2$ as a topological minor has \change{a coarse}{the} threshold $p=\Theta(1/\sqrt{n})$ for any $t\geq 10$. For dimensions $>2$ we could show an upper bound of $O(n^{-1/k})$ for the threshold.

The corresponding lower bound for higher dimensional complexes is open. It does not seem likely that an approach as simple as the one presented here will work in higher dimensions. An essential ingredient of our proof is that the number of triangulations of any fixed surface with a fixed number $l$ of vertices is simply exponential in $l$. The proof also depends on the fact that $f_2(T)=2(|V(T)|+2)$ for any triangulation $T$ of $\Sigma_2$, the surface of genus $2$.
In higher dimensions, it is not clear which manifold could play the role of $\Sigma_2$. Furthermore, bounds on the numbers of triangulations that are simply exponential are not to be expected: The number of $k$-spheres with $l$ vertices, e.g., is known to be at least $2^{\Omega(l^{\lfloor k/2 \rfloor})}$ for $k>3$ \cite{Kalai:1988} and $2^{\Omega(l^{5/4})}$ for $k=3$ \cite{PfeifleZiegler:2004}. 

It is very likely that, just as for graphs, the threshold for complete subdivision containment is actually a sharp threshold. For the upper bound an approach towards proving sharpness might be to combine the basic idea used here with more sophisticated arguments on the random graphs involved.

%
%
%
%

\subsubsection*{Acknowledgements} We would like to thank Matt Kahle and the referee for helpful comments.

\bibliographystyle{abbrv}
\bibliography{SubdivisionContainment}

\end{document}